\newcommand{\lb}{\ensuremath{\langle}}
\newcommand{\rb}{\ensuremath{\rangle}}
\newcommand{\extd}{\ensuremath{\mathbf{d}}}
\DeclareMathOperator{\pr}{pr}
\title{Tensor Products of Dirac Structures and Interconnection in Lagrangian Mechanics}
\author{
\hspace{-1cm} \begin{tabular}{cc}
Henry Jacobs &
Hiroaki Yoshimura\thanks{Research partially supported by JSPS (23560269), JST-
CREST, Waseda University (2012A-602), and the IRSES
project ``Geomech" (246981) within the 7th European Com-
munity Framework Programme.}
\\  Department of Mathematics & Applied Mechanics and Aerospace Engineering
\\ Imperial College London & Waseda University
\\  London SW7 2AZ, United Kingdom & Okubo, Shinjuku, Tokyo 169-8555, Japan \\ h.jacobs@imperial.ac.uk & yoshimura@waseda.jp\\
\end{tabular}\\\\
}
\date{\today}
\begin{document}

\maketitle

\begin{center}
{\it Dedicated to the memory of Jerrold E. Marsden\\ }
\end{center}

\begin{abstract}
Many mechanical systems are large and complex, despite being composed of simple subsystems.
In order to understand such large systems it is natural to tear the system into these subsystems.
Conversely we must understand how to invert this tearing.
In other words, we must understand \emph{interconnection}.
Such an understanding has already successfully understood in the context of Hamiltonian systems on vector spaces via 
the port-Hamiltonian systems program.
In port-Hamiltonian systems theory, interconnection is achieved through the identification of shared variables, whereupon the notion of \emph{composition of Dirac structures}
allows one to interconnect two systems.
In this paper we seek to extend the port-Hamiltonian systems program to Lagrangian systems on manifolds and extend the notion of \emph{composition of Dirac structures} appropriately.
In particular, we will interconnect Lagrange-Dirac systems by modifying the respective Dirac structures of the involved subsystems.
We define the {\it interconnection of Dirac structures} via an {\it interaction Dirac structure} and a {\it tensor product} of Dirac structures.
We will show how the dynamics of the interconnected system is formulated as a function of the subsystems, and we will elucidate the associated variational principles.
We will then illustrate how this theory extends the theory of port-Hamiltonian systems and the notion of \emph{composition of Dirac structures} to manifolds with couplings which do not require the identification of shared variables.
Lastly, we will close with some examples: a mass-spring mechanical systems, an electric circuit, and a nonholonomic mechanical system.
\end{abstract}

\tableofcontents

\section{Introduction}
A large class of physical and engineering systems can be described as constrained or unconstrained Lagrangian or Hamiltonian systems.
However, the analysis of these systems is difficult when the dimensions get large and as the structures becomes more heterogeneous.
For example, consider systems which involve a mixture of mechanical and electrical components with flexible and rigid parts and magnetic couplings (e.g. \cite{Yo1995, Bloch2003,ABHM2006}).
To handle these complex situations, it is natural to \emph{tear} the system into simpler subsystems.
However, once one tears, one is left with a number of disconnected subsystems with undefined dynamics.
The final step in obtaining the dynamics of the connected system is what we call \emph{interconnection}.
In describing interconnected systems, the use of Dirac structures has become standard.
 
  Over the past few decades, Dirac structures have emerged as generalization of symplectic and Poisson structures providing a new perspective on the Hamiltonian formalism (see \cite{Cour1990}).
  Secondly, the Hamilton-Pontryagin variational principle has allowed the Lagrangian formalism (including degenerate Lagrangians) to be written in terms of Dirac structure (see \cite{YoMa2006b}).
  As a result, there is now a formalism which one could call the Dirac formalism which generalizes the Hamiltonian and Lagrangian formalisms.
  In this paper we will consider the interconnection of Lagrange-Dirac dynamical systems where the dynamics can be obtained through \emph{Dirac structures}.

	A Dirac structure is a type of power-conserving relation on a phase space, such as a kinematic constraint, Newton's third law, or a magnetic coupling.
	In particular, we call the Dirac structures which express power-conserving couplings ``interaction Dirac structures.''
	The question we seek to answer is ``how can we use interaction Dirac structures to perform interconnections?''
	More specifically, given mechanical systems with Dirac structures $D_1$ and $D_2$ on manifolds $M_1$ and $M_2$, how do we use an interaction Dirac structure, $D_{\mathrm{int}}$ on $M_1 \times M_2$ ?
	The key ingredient is the Dirac tensor product, denoted $\boxtimes$ (see \cite{Gua2011}) and the answer we propose is that the Dirac structure of the interconnected Lagrange-Dirac system is
	\[
		D_{C} := (D_1 \oplus D_2) \boxtimes D_{\mathrm{int}},
	\]
	which is a Dirac structure over $M_1 \times M_2$.
	We will find that $D_{C}$ is the Dirac structure for the system which couples the Dirac systems on Dirac manifolds $(M_1,D_1)$ and $(M_2,D_2)$ using the power-conserving coupling given by $D_{\mathrm{int}}$.

\section{Background}
An early example of interconnection may be traced back to Gabriel Kron in his book, ``Diakoptics''  (\cite{Kr1963}).
The word ``diakoptics'' denotes the procedure of {\it tearing} a dynamical system into well-understood subsystems.
Each tearing is associated with a constraint on the interface between the two subsystems
and the original system is restored by {\it interconnecting} the subsystems with these constraints.
Kron's theory was further developed to handle power conserving interconnections in the form of bond graph theory (see \cite{Pay1961}).
Additionally, this was later specialized to electrical networks through Kirchhoff's current and voltage laws and the notion of a (nonenergic) multiport (\cite{Bra1971,WyCh1977}).
In mechanics, kinematic constraints due to mechanical joints, nonholonomic constraints, and force equilibrium conditions in d'Alembert's principle lead to these interconnections (\cite{Yo1995}).
In this paper we explore how a Dirac structure can play the role of a nonenergic multiport.

\paragraph{Dirac Structures in Mechanics.}
 In physical and engineering problems, Dirac structures can provide a natural geometric framework for describing interconnections between ``easy-to-analyze'' subsystems.
This is especially evident in the vast and growing literature of port-Hamiltonian systems (see for instance \cite{Van96} and references therein).
As mentioned, Dirac structures generalize Poisson and pre-symplectic structures and hence one can deal with implicitly defined equations of motion for mechanical systems with nonholohomic constraints.
This transition away from Poisson structures and Hamilton's principle induces a transition from ODEs to DAEs, in which case we call the resulting Hamiltonian or Lagrangian systems \emph{Hamilton-Dirac or Lagrange-Dirac systems}.
  In particular, \cite{VaMa1995} demonstrated how certain interconnections could be described by Dirac structures associated to constrained Poisson structures and provided an example of an L-C circuit as a Hamilton-Dirac ({\it implicit Hamiltonian}) system.
  On the Lagrangian side,  \cite{YoMa2006a} showed that nonholonomic mechanical systems and L-C circuits (as degenerate Lagrangian systems) could be formulated as Lagrange-Dirac ({\it implicit Lagrangian}) systems associated with Dirac structures induced from relevant constraint distributions.
  Finally, \cite{YoMa2006b} demonstrated how the {\it implicit Euler-Lagrange equations} for unconstrained systems could be derived from the {\it Hamilton-Pontryagin principle} and how constrained Lagrange-Dirac systems with forces could be formulated in the context of the {\it Lagrange-d'Alembert-Pontryagin principle}. 

\paragraph{Port-Controlled Hamiltonian and Lagrangian Systems.}
  In the realm of control theory, {\it implicit port-controlled Hamiltonian (IPCH) systems} (systems with external control inputs) were developed by \cite{VaMa1995} (see also \cite{BC1997}, \cite{BL2000} and \cite{Van96}) and much effort has been devoted to understanding passivity based control for interconnected IPCH systems (\cite{Ort1998}).
  This perspective builds upon bond-graph theory and has proven useful in deriving equations of motion especially in the context on multi-components systems.  For instance, \cite{Duindam_thesis} used port-based methodologies to describe a controller for a robotic walker.  An overview on the application of port-Hamiltonian systems to controller design for electro-mechanical is given in chapter 3 of (\cite{modelling2009}).

 With regards to theory, the equivalence between {\it controlled Lagrangian (CL) systems} and {\it controlled Hamiltonian (CH)} systems was shown by \cite{ChBlLeMa2002} for non-degenerate Lagrangians.  For the case in which the Lagrangian is degenerate, an implicit Lagrangian analogue of IPCH systems, namely, an {\it implicit port-controlled Lagrangian (IPCL) systems} for electrical circuits were constructed by \cite{YoMa2006c} and \cite{YoMa2007a}, where it was shown that L-C transmission lines can be represented in the context of the IPLC system by employing induced Dirac structures.

 The notion of  \emph{composition of Dirac structures} was developed in \cite{CeVdeScBa2007} for the purpose of interconnection in IPCH systems.
  This provided a new tool for the passive control of IPCH systems.
  In particular, it was shown that the feedback interconnection of a ``plant'' port-Hamiltonian system with a ``controller'' port-Hamiltonian system could be represented by the composition of the plant Dirac structure with the controller Dirac structure.
  While the construction was originally restricted to the case of linear Dirac structures on vector spaces, these constructions have been generalized to the case of manifolds where the ports are modeled with trivial vector-bundles and with flat Ehresmann connections by \cite{Merker2009}.
  However, the existence of a flat Ehresmann connection is not guaranteed on arbitrary vector bundles.
  Therefore, in order to apply the notion of interconnections to Lagrangian systems, we will extend the notion of composition of Dirac structures to the general case of interconnection by constraint distributions on manifolds.
  This extension is the main contribution of the paper.

\section{Main Contributions.}
The main purpose of this paper is to elucidate Kron's notion of interconnections in the context of induced Dirac structures.
To do this, we consider two sub-systems whose equations of motion are given by Lagrange-Dirac systems and with Dirac structures $D_1$ and  $D_2$ on manifolds $M_1$ and $M_2$ respectively.
Then we show how an interconnection between these sub-systems is represented by a Dirac structure, $D_{\mathrm{int}}$ on $M_1 \times M_2$.
We will observe that the connected system is a Lagrange-Dirac system, whose Lagrangian is the sum of the Lagrangians of the sub-systems and whose Dirac structure is given by the formula $D_C = (D_1 \oplus D_2) \boxtimes D_{\rm int}$, where $\boxtimes$ is the Dirac tensor product introduced in \cite{Gua2011}.
More generally, the interconnection of $N$ systems can be done with a single interconnection Dirac structure, $D_{\rm int}$, and the Dirac structure of the interconnected system is given by
 \[
   	\underbrace{ D_C }_{ \text{interconnected} } = \overbrace{ ( D_1 \oplus \cdots \oplus  D_n )}^{ \text{sub-systems} }  \!\!\!\! \!\!\!\! \underbrace{ \boxtimes }_{ \text{ tensor product } } \!\!\!\!\!\!\!\! \overbrace{ D_{\text{int}}}^{\text{ interaction }}\!\!\!\!\!.
 \] 

We do this through the following sequence: In \S \ref{sec:Review}, we briefly review Dirac structures in Lagrangian mechanics following \cite{YoMa2006a,YoMa2006b}.
In \S \ref{sec:tensor_products}, we show how a power-conserving interconnection can be represented by a Dirac structure (usually labeled $D_{\mathrm{int}}$ in this paper) and how one could obtain the Dirac structure of the interconnected system using the {\it tensor product}, $\boxtimes$.
In particular, we have been influenced by the notion of composition of Dirac structures introduced for the purpose of interconnection in \cite{CeVdeScBa2007}.
The constructions we will present modify this notion so that it may be extended to the case of manifolds using intrinsic expressions and a fairly general class of power-conserving couplings which are representable by Dirac structures.
Moreover, we provide an explicit translation of (\cite{CeVdeScBa2007}) into the constructions presented here.
In \S \ref{sec:interconnection}, we explore how this procedure alters the variational structure of Lagrange-Dirac dynamical systems.
In \S \ref{sec:examples}, we demonstrate our theory by applying it to an LCR circuit, a nonholonomic system, and a simple mass-spring system.

\paragraph{Acknowledgements.}    
We are very grateful to Henrique Bursztyn, Mathieu Desbrun, Fran\c{c}ois Gay-Balmaz, Humberto Gonzales, Melvin Leok, Tomoki Ohsawa, Arjan van der Schaft, Joris Vankerschaver, Tudor Ratiu, Jedrzej Sniatycki, and Alan Weinstein for useful remarks and suggestions. The research of H.~J. was funded by NSF grant CCF-1011944. The research of H.~Y. is partially supported by JSPS Grant-in-Aid 23560269, JST-CREST and Waseda University Grant for SR 2012A-602.

\paragraph{Notation and Conventions}
 In this paper, all objects are assumed to be smooth.  Given a manifold $M$, we denote the tangent bundle by $\tau_M: TM \to M$ and the cotangent bundle by $\pi_M: T^{\ast}M \to M$.  Given a fiber bundle, $\pi: F \to M$ we denote the set of sections of $F$ by $\Gamma(F)$.  Lastly, given a second manifold $N$ and a map $f: M \to N$ we denote the tangent lift by $Tf:TM \to TN$, and if $f$ is a diffeomorphism, we may denote the cotangent lift by $T^{\ast}f:T^{\ast}N \to T^{\ast}M$.

\section{Review of Dirac Structures in Lagrangian Mechanics}
\label{sec:Review}

\paragraph{Linear Dirac Structures.} As in \cite{CoWe1988}, we start with finite dimensional vector spaces before going to manifolds.  Let $V$ be a finite dimensional vector space and let $V^{\ast}$ be the dual space, where we denote the natural pairing between $V^{*}$ and $V$ by $\langle\cdot \, , \cdot\rangle$. Define the symmetric pairing
$\langle \! \langle\cdot,\cdot \rangle \!  \rangle$
on $V \oplus V^{\ast}$ by
\begin{equation*}
\langle \! \langle\, (v,\alpha),
(\bar{v},\bar{\alpha}) \,\rangle \!  \rangle
=\langle \alpha, \bar{v} \rangle+\langle \bar{\alpha}, v \rangle,
\end{equation*}
for any $(v,\alpha), (\bar{v},\bar{\alpha}) \in V \oplus V^{\ast}$.
\medskip

A {\it constant Dirac structure} on $V$ is a maximally isotropic subspace $D \subset V \oplus V^{\ast}$ such that $D=D^{\perp}$, where $D^{\perp}$ is the orthogonal complement of $D$ relative to $\langle \! \langle \cdot,\cdot \rangle \!  \rangle$.

\paragraph{Dirac Structures on Manifolds.} 
Let $M$ be a smooth manifold and we denote by $TM \oplus T^{\ast}M$ the Pontryagin bundle, which is the Whitney sum bundle over $M$, namely, the bundle over the base $M$ and with fiber over $x \in M $ equal to $T_xM \times T_x^{\ast}M$. A subbundle, $ D \subset TM \oplus T^{\ast}M$, is called an \emph{almost Dirac structure} on $M$, when $D(x)$ is a Dirac structure on the vector space $T_{x}M$ at each $x \in M$. We can define an almost Dirac structure from a two-form $\Omega$ on $M$ and a regular distribution $\Delta_{M}$ on $M$ as follows: For each $x \in M$, set
\begin{equation}\label{DiracManifold}
\begin{split}
D(x)=\{ (v, \alpha) \in T_xM \times T_x^{\ast}M
  \; \mid \; & v \in \Delta_{M}(x), \; \mbox{and} \\ 
  & \lb \alpha , w \rb =\Omega_{\Delta_{M}}(x)(v,w) \; \;
\mbox{for all} \; \; w \in \Delta_{M}(x) \},
\end{split}
\end{equation}
where $\Delta_{M}^{\circ}$ is the annihilator of $\Delta_{M}$.

\paragraph{Integrablity.}
We call  $D$ an \emph{integrable Dirac structure} if the integrability condition
\begin{equation}\label{ClosedCond}
\langle \pounds_{X_1} \alpha_2, X_3 \rangle
+\langle \pounds_{X_2} \alpha_3, X_1 \rangle+\langle \pounds_{X_3}
\alpha_1, X_2 \rangle=0
\end{equation}
is satisfied for all pairs of vector fields and one-forms $(X_1, \alpha_1)$,
$(X_2,\alpha_2)$, $(X_3,\alpha_3)$ that take values in $D$,
where $\pounds_{X}$ denotes the Lie derivative  along the vector
field $X$ on $M$.

\paragraph{Remark.} Let $\Gamma(TM \oplus T^{\ast}M)$ be a space of local sections of $TM \oplus T^{\ast}M$, which is endowed with the skew-symmetric bracket
$\left[ \cdot ,\cdot \right]: \Gamma(TM \oplus T^{\ast}M) \times  \Gamma(TM \oplus T^{\ast}M) \to  \Gamma(TM \oplus T^{\ast}M)$ defined by
$$
\left[(X,\alpha),(Y,\beta)\right] := \left( \left[X,Y \right],  \pounds_{X} \beta- \pounds_{Y} \alpha + \frac{1}{2}\mathbf{d} (\alpha(Y)-\beta(X)) \right).
$$
This bracket was originally given in \cite{Cour1990} and does not necessarily satisfy the Jacobi identity. It was shown by \cite{Dorfman1993} that the integrability condition of the Dirac structure $D \subset TM \oplus T^{\ast}M$ given in equation \eqref{ClosedCond} can be expressed as
\[
\left[\Gamma(D), \Gamma(D) \right] \subset \Gamma(D), 
\]
which is the closure condition with respect to the Courant bracket.  In particular, this closure condition is the Dirac structure analog of the closure condition of a symplectic structure or the Jacobi identity in the context of Poisson structures.

\paragraph{Induced Dirac Structures.} 
One of the most relevant Dirac structures for Lagrangian mechanics is derived from linear velocity constraints.
Such constraints are given by a regular distribution $\Delta_{Q} \subset TQ$ on a configuration manifold $Q$.
We can naturally derive a Dirac structure over $TT^{\ast}Q$ from $\Delta_{Q}$ using the constructions described in \cite{YoMa2006a}.
\medskip

  Define the lifted distribution on $T^{\ast}Q$ by
\begin{equation*}
\Delta_{T^{\ast}Q}
=( T\pi_{Q})^{-1} \, (\Delta_{Q}) \subset TT^{\ast}Q,
\end{equation*}
where $\pi_{Q}:T^{\ast}Q \to Q$ is the cotangent bundle projection. Let $\Omega$ be the canonical two-form on $T^{\ast}Q$.  Define a Dirac structure $D_{\Delta_{Q}}$ on $T^{\ast}Q$, whose fiber is given for each $(q,p) \in T^{\ast}Q$ by
\begin{align*}\label{inducedDirac}
D_{\Delta_{Q}}(q,p)
& =\{ (v, \alpha) \in T_{(q,p)}(T^{\ast}Q) \times T_{(q,p)}^{\ast}(T^{\ast}Q) \mid v \in
\Delta_{T^{\ast}Q}(q,p),  \; \mbox{and} \;  \nonumber
\\ & \hspace{3cm}
\lb \alpha , w \rb = \Omega_{\Delta_{Q}}(q,p)(v,w) \;\; \mbox{for
all} \;\; w \in \Delta_{T^{\ast}Q}(q,p)\}.
\end{align*}
This Dirac structure is called an {\it induced Dirac structure} and provides an instance of construction \eqref{DiracManifold}.

\paragraph{Local Expressions.} 
Let $V$ be a model space for $Q$ and let $U$ be an open subset of $V$, which is a chart domain on $Q$.
Then, $TQ$ is locally represented by $U \times V$, while $T^{\ast}Q$ is locally represented by $U \times V^{\ast}$. 
Further, $TT^{\ast}Q$ is locally represented by $(U \times V^{\ast}) \times (V \times
V^{\ast})$, while $T^{\ast}T^{\ast}Q$ is locally represented by $(U \times V^{\ast})
\times (V^{\ast} \times V)$. 
\medskip

Using $\pi_Q: T^{\ast}Q\rightarrow Q$ locally denoted by $(q,p) \mapsto q$ and its tangent map $T\pi_Q : TT^{\ast}Q \to TQ;\;(q, p, \delta{q},\delta{p}) \mapsto (q, \delta{q})$, it follows that 
\[
\Delta_{T^{\ast}Q} = \left\{ (q,p, \delta{q}, \delta{p} ) \in TT^{\ast}Q
\mid q \in U, \;\delta{q} \in \Delta(q) \right\}
\]
and the annihilator of $\Delta_{T^{\ast}Q}$ is locally represented as
\[
\Delta^{\circ}_{T^{\ast}Q} = \left\{(q,p,\beta, w) \in T^{\ast}T^{\ast}Q
\mid q \in U,\;\, \beta \in \Delta^{\circ} (q), \;\,  w=0 \right\}.
\]
Since we have the local formula $\Omega ^\flat(q,p) \cdot (q,p,\delta{q}, \delta{p}) = (q,p, - \delta{p}, \delta{q})$,
the condition
$$(q,p,\gamma,u)-\Omega^{\flat}(q,p) \cdot (q,p,\delta{q}, \delta{p}) \in
\Delta^{\circ}_{T^{\ast}Q}$$ for $(q,p,\gamma,u) \in T^{\ast}T^{\ast}Q$ reads
$
\gamma +\delta{p} \in \Delta^{\circ}(q)$ and  $ u-\delta{q}=0.
$
Thus, the induced Dirac structure on $T^{\ast}Q$ is locally represented by
\begin{equation}\label{localdirac}
\begin{split}
D_{\Delta_{Q}}(q,p)  &=
\left\{
\left( (\delta{q}, \delta{p}), (\gamma, u) \right) \mid
\delta{q} \in \Delta(q), \; u= \delta{q}, \; \right.
\left. \gamma +\delta{p} \in \Delta^{\circ}(q)
\right\},
\end{split}
\end{equation}
where $\Delta^{\circ}(q) \subset T^{\ast}_{q}Q$ is the annihilator of $\Delta (q) \subset T_{q}Q$.

\paragraph{Iterated tangent and cotangent bundles.}
  Here we recall the geometry of the iterated tangent and cotangent bundles
$TT^{\ast}Q$,  $T^{\ast}T^{\ast}Q$ and $T^{\ast}TQ$, as well as the Pontryagin bundle $TQ \oplus T^{\ast}Q$. 
Understanding the interrelations between these spaces allows us to better understand the interrelation between Lagrangian systems and Hamiltonian systems, especially in the context of Dirac structures.
In particular, there are two diffeomorphisms between $T^{\ast}TQ$, $TT^{\ast}Q$ and $T^{\ast}T^{\ast}Q$ which were thoroughly investigated in \cite{Tu1977} in the context of the generalized Legendre transform.
\medskip

We first define a natural diffeomorphism
\[
\kappa_{Q}: TT^{\ast}Q \to T^{\ast}TQ; \quad (q, p, \delta q, \delta p) \mapsto (q, \delta q, \delta p, p),
\]
where $(q,p)$ are local coordinates
of $T^{\ast}Q$ and $(q,p,\delta{q},\delta{p})$ are the corresponding
coordinates of $TT^{\ast}Q$, while $(q, \delta q, \delta p, p)$ are
the local coordinates of $T^{\ast}TQ$ induced by $\kappa_{Q}$. 

Second, there exists a natural diffeomorphism $\Omega^{\flat}:TT^{\ast}Q \to T^{\ast}T^{\ast}Q$ associated to the canonical symplectic structure 
$\Omega$, which is locally denoted by
$
(q,p,\delta{q},\delta{p}) \mapsto (q,p,-\delta{p}, \delta{q}),
$
and hence we can define a diffeomorphism $\gamma_{Q}: T^{\ast}TQ \to T^{\ast}T^{\ast}Q$ by
$$
\gamma_{Q}:=\Omega^{\flat} \circ \kappa_{Q}^{-1}; \quad (q,\delta{q},\delta{p},p) \mapsto (q,p,-\delta{p}, \delta{q}).
$$
On the other hand, the Pontryagin bundle is equipped with three natural projections
  \[
  \begin{split}  
    	\pr_Q&: TQ \oplus T^{\ast}Q \to Q;\; (q,\delta{q},p) \mapsto q,  \\
	\pr_{TQ}&: TQ \oplus T^{\ast}Q  \to TQ;\; (q,\delta{q},p) \mapsto (q,\delta{q}), \\
	\pr_{T^{\ast}Q}&: TQ \oplus T^{\ast}Q \to T^{\ast}Q;\; (q,\delta{q},p) \mapsto (q,p).
\end{split}
\]

These interrelations are summarized (and defined) in the commutative diagram shown in Figure  \ref{BundPic}.

\begin{figure}[h]
\begin{center}
\includegraphics[scale=.59]{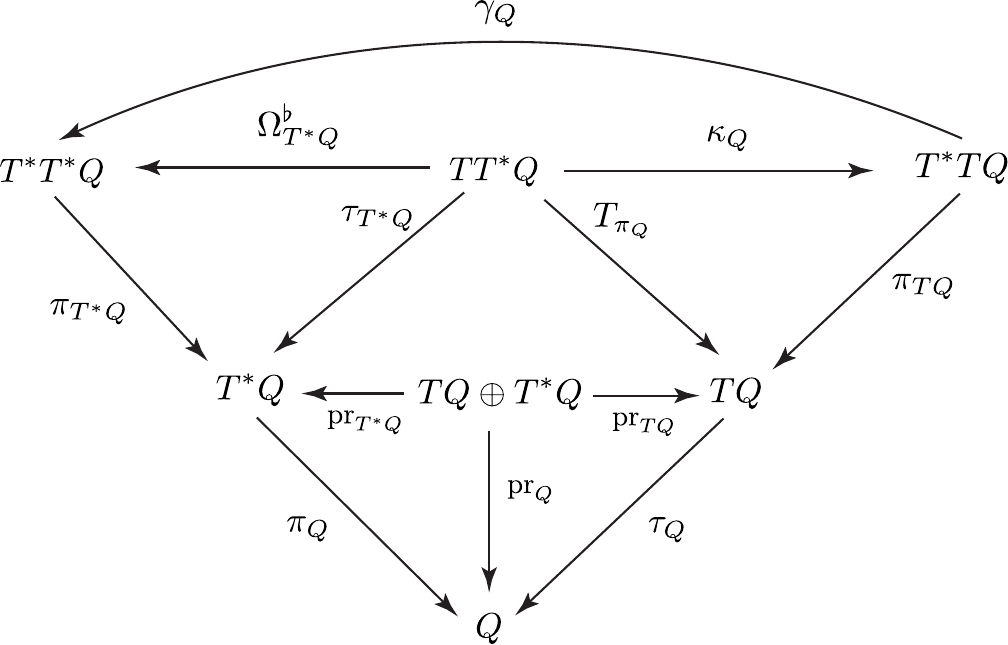}
\caption{The Bundle Picture}
\label{BundPic}
\end{center}
\end{figure}

\paragraph{Lagrange-Dirac Dynamical Systems.} 
Let $L:TQ \to \mathbb{R}$ be a Lagrangian, possibly degenerate. The differential $ \mathbf{d} L:TQ \rightarrow T^{\ast}TQ$ of $L$ is the one-form on $TQ$ which is locally given by, for each $(q,v) \in TQ$,
\[
\mathbf{d} L(q,v)= \left( q,v, \frac{\partial L}{\partial q}, \frac{\partial L}{\partial v}\right).  
\]

Using the canonical diffeomorphism $ \gamma_{Q}:T^{\ast}TQ \rightarrow T^{\ast}T^{\ast}Q$, we define the {\bfi Dirac differential} of $L$ by 
\[
\mathbf{d}_{D} L:= \gamma_{Q} \circ \mathbf{d} : TQ \to T^{\ast}T^{\ast}Q,
\]
which may be locally given by
$$
\mathbf{d}_{D} L(q,v)= \left(q,\frac{\partial L}{\partial v}, - \frac{\partial L}{\partial q},  v \right).
$$
\begin{definition}
Given  an induced Dirac structure $D_{\Delta_{Q}}$ on $T^{\ast}Q$,  the equations of motion of a {\bfi Lagrange-Dirac dynamical system} (or an {implicit Lagrangian system})  $(\mathbf{d}_{D} L,D_{\Delta_Q})$ is given by 
\begin{equation}\label{LDirac_system} 
((q(t),p(t),\dot{q}(t),\dot{p}(t)), \mathbf{d}_{D} L(q(t),v(t))) \in D_{ \Delta _Q }(q(t),p(t)),
\end{equation} 
where $t \in [t_{1},t_{2}]$ denotes the time and we denote by $\dot{q}(t)$ and $\dot{p}(t)$ the time derivatives of $q(t)$ and $p(t)$.
\end{definition}
\paragraph{Remark.} It follows from equation \eqref{LDirac_system} that the equality condition for the base points, which corresponds exactly to the Legendre transform $p = \partial L/ \partial v$, automatically is satisfied.
\medskip

Any curve $(q(t),v(t),p(t)) \in TQ \oplus T^{\ast}Q$ satisfying \eqref{LDirac_system} is called a {\bfi solution curve} of the implicit Lagrangian system.

\paragraph{Local Expressions.} 
It follows from equations \eqref{localdirac} and \eqref{LDirac_system} that the Lagrange-Dirac dynamical system may be locally given by
\begin{equation*} 
p = \frac{\partial L}{\partial v }, \quad \dot{q} =v \in \Delta_{Q} (q), \quad  \dot{p} - \frac{\partial L}{\partial q}
\in \Delta_{Q} ^{\circ} (q).
\end{equation*}
For the unconstrained case, $\Delta_{Q}=TQ$, we can develop the equations of motion called {\bfi implicit Euler-Lagrange equations:}
\begin{equation*}
\begin{split}
p=\frac{\partial L}{\partial v},  \quad \dot{q} =v, \quad   \dot{p} =\frac{\partial L}{\partial q}.
\end{split}
\end{equation*}
Note that the implicit Euler--Lagrange equation contains the Euler--Lagrange equation $ \dot{p}={\partial L}/{\partial q}$, the Legendre transformation,  $p={\partial L}/{\partial v}$, and the second-order condition, $\dot{q}=v$.  In summary, the implicit Euler--Langrange equation provides an DAE on $TQ \oplus T^{\ast}Q$ which is capable of handling degenerate Lagrangians, while the original Euler--Lagrange equation is a second order ODE on $Q$.

\paragraph{The Hamilton-Pontryagin Principle.} 
As is well known, for unconstrained mechanical systems, a solution curve $q(t) \in Q$ of the Euler-Lagrange equation satisfies Hamilton's principle:
\[
  	\delta \int_{t_{1}}^{t_{2}}{L(q(t),\dot{q}(t)) dt} = 0,
\]
for arbitrary variations $\delta q(t) \in TQ$ with fixed endpoints. However, for the case of a degenerate Lagrangian $L$ and with a constraint distribution $\Delta_Q \subset TQ$,  we prefer to employ variational principles on $TQ \oplus T^{\ast}Q$ since primary constraint sets associated to the degenerate Lagrangians and $\Delta_{Q}$ may be given as a subset of $TQ \oplus T^{\ast}Q$ \cite{Dirac1950,YoMa2006a}. So, the natural choice is the {\it Hamilton-Pontryagin principle}, which is given by the stationary condition for curves $(q(t),v(t),p(t)), \, t \in [t_{1}, t_{2}]$ in  $TQ \oplus T^{\ast}Q$ denotes:
\begin{equation*}
	\delta \int_{t_{1}}^{t_{2}} L(q(t),v(t)) + \left\langle p(t),  \dot{q}(t)-v(t) \right\rangle \, dt =0
\end{equation*}
for variations $\delta q(t) \in \Delta_Q$ with fixed endpoints and arbitrary fiberwise variations $\delta p(t)$ and $\delta v(t)$.  

\paragraph{Example:  Harmonic Oscillators.}
  Here we will derive a Lagrange-Dirac dynamical system associated to a linear harmonic oscillator.  In this case, the configuration space is $Q = \mathbb{R}$ where $q \in Q$ represents the position of a particle on the real line.  The Lagrangian is given by $L(q,v) = {v^2}/{2}  - {q^{2}}/{2}$.  Recall that the canonical Dirac structure on $T^{\ast}Q$ is given by $D=\text{graph}(\Omega^\flat)$. 
  \medskip

The Lagrange-Dirac dynamical system $(\mathbf{d}_{D}L,D)$ satisfies, for each $(q,v, p) \in TQ \oplus T^{\ast}Q$,
 \begin{align*}
 ( (q,p,\dot{q},\dot{p}), \mathbf{d}_{D}L(q,v)) \in D(q,p), 
 \end{align*}
 where $p=\partial{L}/\partial{v}$ holds.  It immediately follows $\mathbf{d}_{D}L(q,v) = \Omega^\flat (q,p)\cdot (\dot{q},\dot{p})$.  In local coordinates we may write $\mathbf{d}_{D}L(q,v) =  v dp + q dq$ and $\Omega^\flat(q,p) (\dot{q}, \dot{p}) = -\dot{p} dq + \dot{q} dp$.  Thus, the dynamics of harmonic oscillators may be given by the equations: 
 \[
	 \quad \dot{q} = v, \quad \dot{p} = -q, \quad p=v.
 \]
 
\paragraph{Lagrange-Dirac Systems with External Forces.}
One can lift an external force field  $F: TQ \to T^{\ast}Q$, to a map $\widetilde{F}: TQ \to T^{\ast} T^{\ast} Q$ by the formula
\[
	\langle \widetilde{F}(q,v) , w \rangle = \left< F(q,v), T\pi_{Q}(w) \right> \text{ for all } w \in TT^{\ast}Q,
\]
 Locally, $\widetilde{F}$ is given by $\widetilde{F}(q,v)=(q,p, F(q,v) ,0)$ \cite[\S 7.8]{MandS}.
\medskip

Given a Lagrangian $L:TQ \to \mathbb{R}$ (possibly degenerate), the equations of motion for a {\bfi Lagrange-Dirac system with an external force field} 
$(\mathbf{d}_{D} L, F,D_{\Delta_Q})$ are given by 
\begin{equation*}\label{implLag}
((q(t),p(t),\dot{q}(t),\dot{p}(t)), \mathbf{d}_{D} L(q(t),v(t))-\widetilde{F}(q(t),v(t))) \in D_{ \Delta _Q }(q(t),p(t)).
\end{equation*} 
It follows that the dynamics may be described in local coorindes by
\begin{equation}\label{IPCL_Local}
\dot{q}=v \in \Delta_{Q}(q), \quad  \dot{p}-\frac{\partial L}{\partial q}-F \in \Delta_{Q}^{\circ}(q), \quad  p=\frac{\partial L}{\partial v}.
\end{equation}

Any curve $(q(t),v(t),p(t)) \in TQ \oplus T^{\ast}Q,\,t \in [t_1, t_2]$ is a {\bfi solution curve} of $( \mathbf{d}_{D} L, F,D_{\Delta_Q})$ if and only if it satisfies \eqref{implLag}.

\paragraph{Power Balance Law.} Let $E_{L}(q,v,p)=\left<p, v \right>-L(q,v)$ be a generalized energy on $TQ\oplus T^{\ast}Q$. A solution curve $(q(t),v(t),p(t))$ of $( \mathbf{d}_{D} L, F,D_{\Delta_Q})$ satisfies the power balance condition:
\begin{align*}
\frac{d}{dt}E_{L}(q(t),v(t),p(t))=\left<F(q(t),v(t)), \dot{q}(t) \right>,
\end{align*}
where $\dot{q}(t)=v(t) \in \Delta_{Q}(q)$ and $p(t)=(\partial{L}/\partial{v})(t)$.
\paragraph{The Lagrange-d'Alembert-Pontryagin Principle.} 
Now, we explore the variational structures for Lagrange-Dirac systems with external force fields.  The Lagrange-d'Alembert-Pontryagin principle (or LDAP principle) for a curve $(q(t),v(t),p(t))$, $t \in [t_1, t_2],$  in $TQ \oplus T^{\ast}Q$ is given by
\begin{equation*}\label{LagDAPontPrin_Force}
\begin{split}
&\delta \int_{t_1}^{t_2}  L(q(t),v(t)) + \left<p(t), \dot{q}(t)-v(t) \right> \;dt + \int_{t_1}^{t_2} \left<F(q(t),v(t)), \delta{q}(t)\right>\,dt 
=0
\end{split}
\end{equation*}
for variations $\delta{q}(t) \in \Delta_{Q}(q(t))$ with the endpoints fixed and for all variations of $v(t)$ and $p(t)$, together with the constraint $\dot{q}(t) \in  \Delta_{Q}(q(t))$.
\begin{proposition}
  A curve in $TQ \oplus T^{\ast}Q$ satisfies the LDAP principle if and only if it satisfies \eqref{IPCL_Local}.
\end{proposition}

\begin{proof}
  Taking an appropriate variation of $q(t),v(t)$ and $p(t)$ with fixed end points yields:
\[
\int_{t_{1}}^{t_{2}}\left< \frac{\partial L}{\partial q}-\dot{p} +F, \delta{q}\right> +\left< \frac{\partial
L}{\partial{v}} -p, \delta{v} \right>+ \left<\delta{p}, \dot{q}-v \right>dt=0,
\]
  This is satisfied for all variations $\delta{q}(t) \in \Delta_{Q}(q(t))$ and arbitrary variations $\delta{v}(t)$ and $\delta{p}(t)$, and with the constraint $\dot{q}(t) \in \Delta_{Q}(q(t))$ if and only if \eqref{IPCL_Local} is satisfied.
\end{proof}

\paragraph{Coordinate Expressions.} 
The constraint set $\Delta_Q$ defines a subspace on each fiber of $TQ$, which can be locally be expressed as a subset of $\mathbb{R}^n$. If the dimension of $\Delta_Q(q)$ is $n-m$, then we can choose a basis $e _{m+1}(q), e _{m+2}(q),\ldots, e _n (q)$ of $\Delta(q)$. Recall that the constraint set can be also represented by the annihilator $\Delta^{\circ}(q)$, which is spanned by $m$  one-forms $\omega^{1}, \omega^{2}, \ldots, \omega^{m}$ on $Q$. It follows that equation \eqref{IPCL_Local} can be represented, in coordinates, by employing the Lagrange multipliers $\mu_{a},\,a=1,...,m$, as follows:
\begin{align*}
\begin{pmatrix}
\dot{q}^{i} \\
\dot{p}_{i} \\
\end{pmatrix}
& =
\begin{pmatrix}
0 & 1\\
-1 & 0
\end{pmatrix}
\begin{pmatrix}
-\frac{\partial{L}}{\partial{q}^{i}}-F_{i} \vspace{1mm} \\
v^{i}
\end{pmatrix}
+
\begin{pmatrix}
0 \\
\mu_{a}\,\omega_{i}^{a}
\end{pmatrix}, \\
p_{i}&=\frac{\partial L}{\partial v^{i}},\\
0&=\omega^a_{i}\,v^{i},
\end{align*}
where we employ the local expression $\omega^{a}=\omega^{a}_{i}\,dq^{i}$.

\paragraph{Example: Harmonic Oscillators with Damping.} As before, let $Q = \mathbb{R}$, $L(q,v) = {v^2}/{2} - {q^2}/{2}$ and $D= \operatorname{graph} \, \Omega^\flat$.  Now consider the force field $F:TQ  \to T^{\ast}Q$ defined by $F(q,v) = -(r v) dq$, where $r$ is a positive damping coefficient. Then, $\widetilde{F}(q,v) = (q,p,rv,0)$.  The formulas in equation \eqref{IPCL_Local} give us the equations:
\begin{align*}
	\dot{q} = v, \quad \dot{p} + q + rv = 0,
\end{align*}
with the Legendre transformation $p = v$.

\section{Tensor Products of Dirac Structures}
\label{sec:tensor_products}
\paragraph{Tearing and Interconnecting Physical Systems.}
For modeling complicated physical systems such as multibody systems, large scale networks, electromechanical systems and molecular systems, it is quite useful to employ a {\it modular decomposition}; one may {\it decompose or tear} the concerned system into separate constituent subsystems and then reconstruct the whole system by interconnecting the separate subsystems. In particular, the interconnection may be regarded as  a {\it power conserving interaction} in a variety of ways. Such a power conserving interaction may be physically appeared, for instance, as massless hinges, soldering of wires, conversion of current into torque by a motor, interaction potentials, etc. 

In this section, we show that many power conserving interactions can be effectively expressed by Dirac structures.  A typical interaction between two separate physical system is illustrated in Figure \ref{MechConnect}. We assume that the interaction between two particles holds the power invariance 
$$
\left<f_{1}(t), v_{1}(t)\right>+\left<f_{2}(t), v_{2}(t)\right>=0
$$
for all time $t \in [t_{1},t_{2}]$, such that the velocities, $v_{i}$, and forces, $f_{i}$, satisfy the condition
$$
((v_{1},v_{2}),(f_{1},f_{2})) \in \Sigma_{Q} \times \Sigma_{Q}^{\circ},
$$
where $\Sigma_{Q}$ is a given distribution associated to the interaction.
This does not determine the forces $f_1$ and $f_2$, but instead constrains the set of admissible forces.
If one models the forces using an interaction potential, this places an admissibility constraint on such a potential (e.g. the potential may only depend on the distances between the particles).
\begin{figure}[h]
\begin{center}
\vspace{5mm}
\includegraphics[scale=.45]{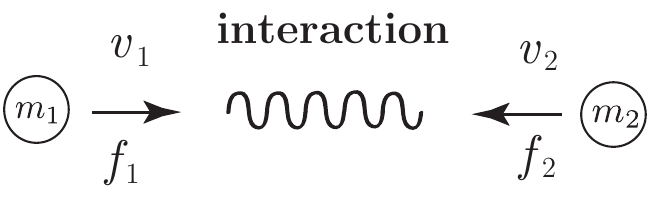}
\caption{Interaction between Two Particles}
\label{MechConnect}
\end{center}
\end{figure}

In this section, we will show how such an {\it interaction Dirac structure} $D_{\mathrm{int}}$ is constructed from $\Sigma_{Q}$.  We will then show how separate systems with Dirac structures $D_1, \dots, D_n$ can be interconnected by $D_{\mathrm{int}}$.
   At this point the readers might justifiably ask ``why one would use interaction Dirac structures to interconnect systems ?'' The answer is that the Dirac structure of an interconnected dynamical system can be expressed by
   \[
   	\underbrace{ D }_{ \begin{array}{c} \text{ \small interconnected} \\ \text{\small Dirac structure} \end{array} } \!\!\!\!=\!\!\!\! \overbrace{ ( D_1 \oplus \cdots \oplus  D_n )}^{ \text{separate Dirac structures} }  \!\!\!\! \!\!\!\! \underbrace{ \boxtimes }_{ \begin{array}{c} \text{ \small tensor } \\ \text{ \small product } \end{array} } \!\!\!\!\!\!\!\! \overbrace{ D_{\text{int}}}^{\text{ interaction }}\!\!\!\!\!,
   \]
Where $\boxtimes$ is a tensor product which will be explained in the sequel.  Such an expression is quite useful for the purpose of the modular modeling as it allows one to describe systems in isolation before discussing the couplings between them.  We refer to the transition from the separate Dirac structures $D_1, \dots, D_n$ to the interconnected Dirac structure $D$ as an {\it interconnection of Dirac structures}.

\paragraph{Standard Interaction Dirac Structures.}
Consider a regular distribution $\Sigma_{Q} \subset TQ$ and define the lifted distribution on $T^{\ast}Q$ by
\[
	\Sigma_{\mathrm{int}}=(T\pi_{Q})^{-1}(\Sigma_{Q}) \subset TT^{\ast}Q.
\]
Let $\Sigma_{\mathrm{int}}^{\circ}$ be the annihilator of $\Sigma_{\mathrm{int}}$. Then, a {\bfi standard interaction Dirac structure} on $T^{\ast}Q$ is defined by, for each $(q,p) \in T^{\ast}Q$,
\begin{align*}
D_{\mathrm{int}}= \Sigma_{\mathrm{int}} \oplus  \Sigma_{\mathrm{int}}^{\circ}.
\end{align*}

Alternatively, one can formulate $D_{\mathrm{int}}$ by using the Dirac structure $D_Q = \Sigma_Q \oplus \Sigma^\circ_Q$ and set
\begin{align}\label{IntDirac}
D_{\rm int}=\pi_{Q}^{\ast}D_{Q}.
\end{align}
 In the next example we will see how this Dirac structure implies {\bfi Newton's third law of action and reaction} (see \cite{YoMa2006a}).
   
\paragraph{Example: Two Particles Moving in Contact.}
Consider two masses on the real line which are constrained to remain in contact.  Denote the velocities velocities are given by $(v_{1},v_{2}) \in V = \mathbb{R}^2$, where $v_{i}$ denotes the velocity of the $i$-th particle.  Since the two particles are in contact and their velocities are common, it follows
\begin{align*}
	(v_{1}, v_{2}) \in \Sigma_{V} \subset V, 
\end{align*}
where $\Sigma_{V}=\{ (v_{1}, v_{2}) \mid v_{1} = v_{2}\}$ is a constraint subspace of $V$.  This constraint is enforced through the associated constraint forces $(f_{1},f_{2}) \in V^{\ast}$ at the contact point, where $f_{i}$ denotes the velocity of the $i$-th particle.  In particular, $F$ must satisfy the constraint
\begin{align*}
	(f_1, f_2) \in \Sigma_{V}^{\circ} \subset V^{\ast},
\end{align*}
where $\Sigma_{V}^{\circ} =\{ (f_{1}, f_{2}) \mid f_{1} = -f_{2}\}$ is the annihilator of $\Sigma_{V}$.  This is the content of Newton's third law, ``every action has an equal and opposite reaction''.  Finally we can define the interaction Dirac structure as
$$
D_{V} = \Sigma_V \oplus \Sigma_V^\circ. 
$$
The two particles moving with the velocities $v_1$ and $v_2$ under the exerting forces $F_1$ and $F_2$ will obey the dynamics of two particle moving in contact if and only if $(v_{1},v_{2},F_{1},F_{2}) \in D_{V}$.  Therefore $D_{V}$ denotes the constraint on tuples of admissible velocities and constraint forces for the system.  Needless to say, one can develop the interaction Dirac structure on $T^*{V} \equiv V \times V^{\ast}$ as well via \eqref{IntDirac}.

\paragraph{Example: Interaction of Two Circuits.}
Consider an interaction between two separate circuits as shown in Figure \ref{interaction_circuit}. Let $Z_{1}$ and $Z_{2}$ denote the impedances, $v_{1},v_{2} \in V$ the currents (usually denoted with $i$'s) and $f_{1},f_{2} \in V^{\ast}$ the voltages (usually denoted with $v$'s) associated to $Z_{1}, Z_{2}$ respectively. The interaction may be simply represented by a {\it two-port} circuit whose 
constitutive relations are given by
$$
v_{1}=v_{2} \;\;\;\textrm{and} \;\;\; f_{1}+f_{2}=0.
$$
These are {\bfi Kirchhoff's laws of currents and voltages}, which clearly correspond to the circuit analogue of Newton's third law. 
In particular the set of admissible currents defines the constraint subspace $\Sigma_V$ and one can construct
the Dirac structure 
$
D_{V} = \Sigma_V \oplus \Sigma_V^\circ.
$
The interaction Dirac structure is $D_{\mathrm{int}} = \pi_{V}^* D_V$.
\begin{figure}[h] 
   \centering
\includegraphics[scale=.7]{./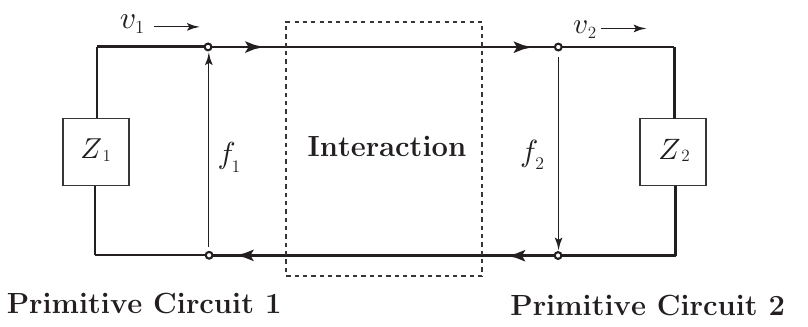} 
   \caption{Interaction of Circuits}
   \label{interaction_circuit}
\end{figure}

\paragraph{Remarks.} In this paper, we will mainly consider interaction Dirac structures of the form $D_{\mathrm{int}}=\Sigma_{\mathrm{int}} \oplus \Sigma^\circ_{\mathrm{int}}$, while there exists a more general class of interaction Dirac structures. For instance, the Lorentz force on a charged particle moving through a magnetic field can be represented by an interaction Dirac structure induced from a {\it magnetic two-form}. It is known that analysis of such a coupled system may be generalized into {\it Lagrangian reduction theory} (see \cite{MandS}). This will need to be the subject of future work.  For now we will provide two examples which hopefully illustrate what is possible.

\paragraph{Example: A Particle Moving Through a Magnetic Field.}
  Consider an electron moving through a vacuum in $Q=\mathbb{R}^{3}$.  The equations of motion are just given by $\ddot{x} = 0, \ddot{y} = 0, \ddot{z} = 0$.  We could think of this system as a set of three decoupled systems with constant velocities.  Now given a magnetic field $\mathbf{B}=B_{x}\mathbf{i}+B_{y}\mathbf{j}+B_{z}\mathbf{k}$ and let $B$ be a closed two-form on $Q=\mathbb{R}^{3}$ defined by
\[
\mathbf{i}_{\mathbf{B}}(dx \wedge dy \wedge dz)=B,
\]
where
\[
B=B_{x} dy \wedge dz +B_{y} dz \wedge dx +B_{z} dx \wedge dy.
\]
Using $B$, one can define a closed  two-form $\Omega_{\mathrm{int}}$ on $T^{\ast}Q=\mathbb{R}^{3} \times \mathbb{R}^{3}$ by $\Omega_{\rm int} =-\frac{e}{c}\pi_{Q}^{\ast} B$.
The force on a charged particle moving through the magnetic field $B$ is given the {\it Lorentz force}, $f=-({e}/{c})\,\mathbf{i}_{\mathbf{v}}B$. In other words, the Lorentz force couples the dynamics of the particle with the magnetic field.
If we desire to express this coupling in the form of an interaction Dirac structure, one could define the {\bfi magnetic Dirac structure} $D_{\mathrm{mag}}$ on $T^{\ast}Q$ by $D_{\text{mag}}=\mathrm{graph}\,\Omega_{\mathrm{\,int}}^{\flat}$.

\paragraph{Example:  An Ideal Direct Current Motor.}
  The form of the Dirac structures given in the previous paragraph also describes the structure of an ideal Direct Current (DC) motor.  In this case, the configuration manifold may given by $\mathbb{R} \times S^1$, where the first component is the charge through the armature of a DC motor and the second component is the angle of the motor shaft, which represents an element of the unit circle.  When an armature (a coil with wiring loops) current $I$ passes through the magnetic field of the motor, it generates a motor torque as $\tau = K \cdot I$ for some motor constant $K$.  Geometrically, given coordinates $(q, \theta)$ on $\mathbb{R} \times S^1$, we can express the relationship between current and torque with the two-form $B = K dq \wedge d \theta$ so that $\tau = B( I , \cdot )$.  Finally, this can all be expressed with the Dirac structure
  \begin{align*}
  	D_{\text{motor}} &= \mathrm{graph}\,B \\
		&= \{ ((I, \omega) , (V, \tau)) \in T(\mathbb{R} \times S^1) \times T^{\ast}(\mathbb{R} \times S^1) \mid V= -K \cdot \omega , \;\; \tau = K \cdot I \},
  \end{align*}
where $I$ and $V$ are the current and voltage associated with the armature of the DC motor, and $\omega$ and $\tau$ are the angular velocity and torque of the motor shaft.  Given a circuit and a mechanical system connected by an ideal motor, the above interaction Dirac structure would characterize the interconnection between electrical and mechanical systems.  This is an early step in understanding an interconnection of electro-mechanical systems in Lagrangian mechanics.

\paragraph{The Direct Sum of Dirac Structures.}
  So far we have shown how to express interconnections as interaction Dirac structures.  We intend to use these interaction Dirac structures to interconnect subsystems on separate manifolds $M_1$ and $M_2$.  However, before going into the interconnection of mechanical systems on separate manifolds, let us formalize the notion of a ``direct sum'' of systems on separate spaces.  Given two vector bundles $V_1 \to M_1$ and $V_2 \to M_2$ the direct sum $V_1 \oplus V_2$ is a vector bundle over $M_1 \times M_2$. In the context of Dirac structures (which are a special case) we have the following additional closures.
\begin{proposition}
\label{prop:direct_sum}
If  $D_1 \in \mathrm{Dir}{(M_1)}, D_2 \in \mathrm{Dir}{(M_2)}$, then  $D_1 \oplus D_2 \in \mathrm{Dir}{(M_1 \times M_2)}$.
Moreover, if $D_1$ and $D_2$ are integrable, then $D_1 \oplus D_2$ is integrable.
\end{proposition}
\begin{proof}
	As the dimension of each fiber of $D_1 \oplus D_2$ is equal to $\dim(M_1) + \dim(M_2)$ is sufficient to prove that $D_1 \oplus D_2$ is isotropic in order to assert that it is a Dirac structures.
	The isotropic condition can be verified taking an arbitrary $(v_1,v_2,\alpha_1,\alpha_2) , (w_1,w_2,\beta_1,\beta_2) \in D_1 \oplus D_2$ and noting
	\[
		\langle \! \langle (v_1,v_2,\alpha_1,\alpha_2) , (w_1,w_2,\beta_1,\beta_2) \rangle \! \rangle= \langle \! \langle (v_1,\alpha_1), (w_1,\beta_1) \rangle \! \rangle + \langle \! \langle (v_2,\alpha_2), (w_2,\beta_2) \rangle \! \rangle = 0
	\]
	where the final equality follows from the isotropy of $D_1$ and $D_2$.
	A similarly simple verification holds for proving integrability by computing the left hand side of \eqref{ClosedCond} and noting the formula splits into a direct sum of two parts which are clearly contained in $D_1$ and $D_2$ respectively  by the assumed integrability of $D_1$ and $D_2$.
\end{proof}

The following corollary is, perhaps, equally obvious.  However, it is particularly relevant for the case at hand.

\begin{corollary}
Let $\Omega_{i}$ be the canonical symplectic structures on $T^{\ast}Q_{i}$ and $D_{\Delta_{Q_{i}}}$ the Dirac structures on $T^{\ast}Q_{i}$ induced from constraint distributions, $\Delta_{Q_i} \subset TQ_i$, for $i=1,2$.
Then $D_{\Delta_{Q_{1}}} \oplus D_{\Delta_{Q_{2}}}$ may be expressed as an induced Dirac structure on $T^{\ast}(Q_1 \times Q_2)$.
In particular, $D_{\Delta_{Q_1} } \oplus D_{\Delta_{Q_2}} = D_{\Delta_{Q_1} \oplus \Delta_{Q_2} }$.
\end{corollary}

It is notable that the direct sum of Dirac structures does not express any interaction between separate systems.
To express interactions using Dirac structures associated to power-conserving couplings we will require a tensor product of Dirac structures.
\begin{definition}[\cite{Gua2011}]
 Let $D_a,D_b \in \mathrm{Dir}(M)$.  Let $d: M \hookrightarrow M \times M$ be the diagonal embedding in $M \times M$. We define the {\bfi Dirac tensor product} of $D_a$ and $D_b$ by
    \begin{align*}
    	D_a \boxtimes D_b := d^{\ast}(D_a \oplus D_b) \equiv \frac{ (D_a \oplus D_b \cap K^\perp) + K }{K},
    \end{align*}
where $K = \{ (0,0) \} \oplus\{(\beta,-\beta)  \} \subset T(M \times M) \oplus T^{\ast}(M \times M)$
and its orthogonal complement $K^{\perp} \subset T(M \times M) \oplus T^{\ast}(M \times M)$ is given by $K^{\perp} = \{ (v,v)\} \oplus  T^{\ast}(M \times M)$.
\end{definition}

\begin{theorem}[\cite{Gua2011}]
	If $D_a \oplus D_b \cap K^\perp$ has locally constant rank then $D_a \boxtimes D_{b}$ is a Dirac structure on $M$.
\end{theorem}
\begin{corollary}
	Let $D_{\Delta_Q}$ be a constraint induced Dirac structure on $T^{\ast}Q$, and let $D_{\rm int} = \pi_Q^* ( \Sigma_Q \oplus \Sigma_Q^\circ)$ be a Dirac structure given by a constraint distribution $\Sigma_Q \subset TQ$.
	Then $D_{\Delta_Q} \boxtimes D_{\rm int}$ is a Dirac structure if $\Delta_Q \cap \Sigma_Q$ is a regular distribution.
\end{corollary}

\paragraph{Remark.}
By the definition of $\boxtimes$, it is clear that if $D_1$ and $D_2$ are integrable Dirac structures, then $D_1 \boxtimes D_2$ is integrable.

\paragraph{Remark.}
In  \cite{YoJaMa2010} and \cite{JYM2010}, we defined the bowtie product
	\begin{align}\label{defbowtie}
		D_a \bowtie D_{b}& = \{ (v,\alpha) \in T{M}\oplus T^{\ast}M \mid \exists \beta \in T^{\ast}M  \nonumber \\
		&\hspace{2cm} \textrm{ such that } (v,\alpha+\beta) \in D_a,(v,-\beta) \in D_{b} \},
	\end{align}
which is equivalent with the tensor product, $\boxtimes$.\footnote{We appreciate Henrique Bursztyn for pointing out this fact in Iberoamerican Meeting on Geometry, Mechanics and Control in honor of Hern\'an Cendra at Centro At\'omico Bariloche, January 13, 2011.}

\paragraph{Properties of the Dirac Tensor Product.}
  It has been shown already that the Dirac tensor product is associative, commutative, and preserves the integreability condition (see \cite{Gua2011}).   Here, we will review these properties with the use a special blinear map, $\Omega_{\Delta_{M}}: \Delta_M \oplus \Delta_M \to \mathbb{R}$, induced from a Dirac structure $D$ on $M$ with $\Delta_{M} = \pr_{TM}(D) \subset TM$, where $\pr_{TM}: TM \oplus T^{\ast}M;\, (v,\alpha) \mapsto v$ and we assume that $\Delta_{M}$ is smooth.

\begin{lemma}
	\label{lem:two-form}
	On each fiber of $ T_{x}M \times T_{x}^{\ast}M$ at $x \in M$, there exists a bilinear anti-symmetric map $\Omega_{\Delta_{M}}(x) : \Delta_{M}(x) \times \Delta_{M}(x)  \to \mathbb{R}$ defined by the property 
	\begin{equation*}
		\Omega_{\Delta_{M}}(x)( v_1, v_2) = \lb \alpha_1, v_2 \rb \;\text{when }\; (v_1,\alpha_1) \in D(x). 
		\label{dirac_2_form}
	\end{equation*}
\end{lemma}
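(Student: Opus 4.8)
The plan is to show that the formula \eqref{dirac_2_form} is (i) well-defined, i.e.\ independent of the choice of $\alpha_1$ completing $v_1$ to an element of $D(x)$, (ii) bilinear, and (iii) anti-symmetric. The only nontrivial point is well-definedness; bilinearity will be immediate from the fact that $D(x)$ is a linear subspace, and anti-symmetry will follow from the maximal isotropy condition $D(x) = D(x)^\perp$ via Lemma \ref{lem:isotropic}.

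First I would fix $x \in M$ and drop it from the notation. Given $v_1 \in \Delta = \pr_{TM}(D)$, by definition there is at least one $\alpha_1$ with $(v_1,\alpha_1) \in D$; similarly pick $(v_2,\alpha_2) \in D$. For well-definedness, suppose $(v_1,\alpha_1)$ and $(v_1,\alpha_1')$ are both in $D$. Then their difference $(0,\alpha_1-\alpha_1') \in D$ since $D$ is a subspace. Now for any $(v_2,\alpha_2)\in D$ we compute, using $D = D^\perp$,
\[
0 = \lb\!\lb (0,\alpha_1-\alpha_1'),(v_2,\alpha_2)\rb\!\rb = \lb \alpha_1-\alpha_1', v_2\rb + \lb \alpha_2, 0\rb = \lb \alpha_1 - \alpha_1', v_2\rb,
\]
so $\lb\alpha_1,v_2\rb = \lb\alpha_1',v_2\rb$. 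Hence the value $\lb\alpha_1,v_2\rb$ depends only on $v_1$ and $v_2$, not on the chosen $\alpha_1$ (and by the same token not on $\alpha_2$, since $\lb\alpha_1,v_2\rb$ does not involve $\alpha_2$ at all). This defines the map $\Omega_\Delta: \Delta\times\Delta\to\mathbb{R}$.

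Bilinearity then follows because, given $(v_1,\alpha_1),(v_1',\alpha_1')\in D$ and scalars $a,b$, the combination $(av_1+bv_1', a\alpha_1+b\alpha_1')$ lies in $D$, so it is a legitimate representative for the first slot; linearity in the second slot is clear since $v_2\mapsto\lb\alpha_1,v_2\rb$ is linear for fixed $\alpha_1$. For anti-symmetry, take $(v_1,\alpha_1),(v_2,\alpha_2)\in D$ and apply Lemma \ref{lem:isotropic} (or directly $D\subset D^\perp$) to the sum $(v_1+v_2,\alpha_1+\alpha_2)\in D$, or more simply pair the two elements:
\[
0 = \lb\!\lb (v_1,\alpha_1),(v_2,\alpha_2)\rb\!\rb = \lb\alpha_1,v_2\rb + \lb\alpha_2,v_1\rb = \Omega_\Delta(v_1,v_2) + \Omega_\Delta(v_2,v_1),
\]
which gives $\Omega_\Delta(v_2,v_1) = -\Omega_\Delta(v_1,v_2)$. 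Since all the arguments are pointwise-linear-algebraic and the dimension of $D(x)$ is constant in $x$, smoothness of $\Omega_\Delta$ as a section is inherited from smoothness of $D$; I do not expect any genuine obstacle here — the one place to be careful is simply to invoke $D=D^\perp$ (not merely isotropy) for well-definedness, exactly as in the displayed computation above.
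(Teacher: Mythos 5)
Your proof is correct, and for the parts that overlap with the paper's argument (linearity in each slot from the linearity of the subspace $D(x)$, and anti-symmetry from pairing two elements of $D$ under $\lb\!\lb\cdot,\cdot\rb\!\rb$) you follow essentially the same route the authors do. The genuine difference is that you also prove well-definedness --- that $\lb\alpha_1,v_2\rb$ is independent of the choice of $\alpha_1$ with $(v_1,\alpha_1)\in D(x)$ --- by pairing $(0,\alpha_1-\alpha_1')\in D(x)$ against $(v_2,\alpha_2)\in D(x)$. The paper's proof announces that the formula defines a \emph{unique} map but never carries out this step, so your version is the more complete one; this is worth having, since without it the ``definition'' \eqref{dirac_2_form} is not obviously a function of $(v_1,v_2)$ at all. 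One small correction to your closing remark: the well-definedness computation only uses that the two elements $(0,\alpha_1-\alpha_1')$ and $(v_2,\alpha_2)$ of $D(x)$ pair to zero, which is exactly isotropy $D\subset D^{\perp}$; the maximality half of $D=D^{\perp}$ is not needed there (it is what guarantees $\Delta=\pr_{TM}(D)$ and the rest of the structure, but your displayed identity goes through for any isotropic subspace).
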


This bilinear map was initially introduced by \cite{CoWe1988} for the case of linear Dirac structures.  We can easily generalize it to the case of general manifolds since $\Omega_{\Delta_{M}}$ may be defined fiberwise (see also \cite {Cour1990} and \cite{DuWa2004}).
\medskip

	Given a Dirac structure $D \in \mathrm{Dir}(M)$, it follows from equation \eqref{DiracManifold} that, for each $x \in M$, $D(x)$ may be given by 
\begin{align*}
D(x)=\{ (v, \alpha) \in T_{x}M \times  T_{x}^{\ast}M
  \;& \mid \;  v \in \Delta_{M}(x),\; \mbox{and} \; \;\\
   &\alpha(w)=\Omega_{\Delta_{M}}(x)(v,w) \; \;
\mbox{for all} \; \; w \in \Delta_{M}(x) \},
\end{align*}

\begin{proposition}
\label{thm:bowtie}
  Let $D_a$ and $D_b \in \mathrm{Dir} (M)$.  Let $\Delta_a = \pr_{TM}(D_a)$ and $\Delta_{b} =\pr_{TM}(D_b)$. Let $\Omega_a$ and $\Omega_{b}$ be the bilinear maps induced by $D_a$ and $D_b$ respectively. If $\Delta_{a} \cap \Delta_{b}$ has locally constant rank, then $D_{a} \boxtimes D_{b}$ is a Dirac structure with the smooth distribution $\pr_{TM}(D_{a} \boxtimes D_{b}) = \Delta_{a} \cap \Delta_{b}$ and with the bilinear map $(\Omega_{a}+\Omega_{b})|_{\Delta_{a} \cap \Delta_{b}}$.
\end{proposition}

\begin{proof}
\label{pf:bowtie}
Let $(v,\alpha) \in D_{a} \boxtimes D_b(x)$ for $x \in M$.  By definition of the Dirac tensor product in \eqref{defbowtie}, there exists $\beta \in T^{\ast}_{x}M$ such that $(v, \alpha + \beta) \in D_{a}(x), (v,-\beta) \in D_b(x)$.  Hence, one has 
\[
  	 \Omega_a^{\flat}(x) \cdot v - \alpha - \beta \in \Delta_a^\circ(x) \;\;\text{and}\;\;
	 \Omega_b^{\flat}(x) \cdot v + \beta \in \Delta_b^\circ(x), \; \text{for each}\; x \in M, 
\]
where $v \in \Delta_a(x)$ and $v \in \Delta_b(x)$. This means $(\Omega_a^{\flat} + \Omega_b^{\flat})(x)\cdot v - \alpha \in \Delta_a^\circ(x) + \Delta_b^\circ(x)$ and $v \in \Delta_a \cap \Delta_b(x)$.  But $\Delta_a^\circ(x) + \Delta_b^\circ(x) = (\Delta_a \cap \Delta_b)^\circ(x)$.  Therefore, upon setting $\Omega_c = \Omega_a + \Omega_b$ and $\Delta_c = \Delta_a \cap \Delta_b$, we can write
$\Omega_c^{\flat}(x)\cdot v - \alpha \in \Delta_c^\circ(x)$ and $v \in \Delta_c(x)$; namely, $(v,\alpha) \in D_c(x)$, where $D_{c}$ is a Dirac structure with $\Delta_{c}$ and $\Omega_{c}$. Then, it follows that $D_{a} \boxtimes D_b \subset D_c$.  Equality follows from the fact that both $D_{a} \boxtimes D_b(x)$ and $D_c(x)$ are subspaces of $T_{x}M \times T^{\ast}_{x} M$ with the same dimension.
\end{proof}

\begin{corollary}
\label{cor:bowtie2}
  If $\Omega_b = 0$, then it follows that $D_b = \Delta_b \oplus \Delta_b^\circ$ and also that $D_c = D_a \boxtimes D_b$ is induced from $\Delta_a \cap \Delta_b$ and $\Omega_a|_{\Delta_a \cap \Delta_b}$.
\end{corollary}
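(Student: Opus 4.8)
The plan is to obtain both assertions as immediate specializations: the first from the Dirac two-form representation of a Dirac structure recalled just above Theorem \ref{thm:bowtie}, and the second by substituting $\Omega_b = 0$ into the conclusion of Theorem \ref{thm:bowtie}. No new machinery is needed; everything reduces to what has already been established.

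For the first claim, I would recall that every Dirac structure $D_b \in \daleth(M)$ admits, fiberwise, the description
\[
D_b(x) = \{ (v,\alpha) \in T_xM \times T_x^{\ast}M \mid v \in \Delta_b(x),\ \alpha - \Omega_b^{\flat}(x)\cdot v \in \Delta_b^{\circ}(x) \},
\]
where $\Omega_b^{\flat}(x) : T_xM \to T_x^{\ast}M$ is the skew bundle map extending the Dirac two-form $\Omega_b$. The hypothesis $\Omega_b = 0$ forces $\Omega_b^{\flat}(x) = 0$ at every $x \in M$, so the membership condition collapses to $v \in \Delta_b(x)$ together with $\alpha \in \Delta_b^{\circ}(x)$; hence $D_b(x) = \Delta_b(x) \times \Delta_b^{\circ}(x)$, which is exactly the fiber of $\Delta_b \oplus \Delta_b^{\circ}$, i.e.\ the $\Omega = 0$ instance of the construction \eqref{DiracManifold}. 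Since $x$ is arbitrary, $D_b = \Delta_b \oplus \Delta_b^{\circ}$.

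For the second claim, I would invoke Theorem \ref{thm:bowtie}: under the standing constant-rank assumption on $\Delta_a \cap \Delta_b$, the bowtie product $D_a \bowtie D_b$ is a Dirac structure whose distribution is $\Delta_a \cap \Delta_b$ and whose Dirac two-form is $(\Omega_a + \Omega_b)|_{\Delta_a \cap \Delta_b}$. Setting $\Omega_b = 0$ reduces this two-form to $\Omega_a|_{\Delta_a \cap \Delta_b}$, which is precisely the restriction of $\Omega_a$ to the intersection distribution that appears in the induced-Dirac-structure construction \eqref{DiracManifold}. Therefore $D_c = D_a \bowtie D_b$ is the Dirac structure induced by the constraint $\Delta_a \cap \Delta_b$ together with the two-form $\Omega_a$, as claimed.

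I do not expect a genuine obstacle here; the corollary is a clean specialization of Theorem \ref{thm:bowtie} and Lemma \ref{lem:two-form}. The only points demanding minor care are (i) confirming that the notation $\Delta_b \oplus \Delta_b^{\circ}$ is read as the fiberwise product $\Delta_b(x) \times \Delta_b^{\circ}(x)$, which matches the zero-two-form case of \eqref{DiracManifold}, and (ii) noting that the constant-rank hypothesis on $\Delta_a \cap \Delta_b$ remains in force — it is not rendered superfluous by $\Omega_b = 0$ — so that Theorem \ref{thm:bowtie} genuinely applies.
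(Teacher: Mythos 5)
Your proposal is correct and follows exactly the route the paper intends: the paper states this corollary without a separate proof, treating it as the immediate specialization of Theorem \ref{thm:bowtie} (Dirac two-form $(\Omega_a+\Omega_b)|_{\Delta_a\cap\Delta_b}$ reduces to $\Omega_a|_{\Delta_a\cap\Delta_b}$) together with the fiberwise description of a Dirac structure via its distribution and Dirac two-form. Your added care about the constant-rank hypothesis and the reading of $\Delta_b \oplus \Delta_b^{\circ}$ as the $\Omega=0$ case of \eqref{DiracManifold} is consistent with the paper.
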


\begin{proposition}\label{associative} 
	Let $D_a,D_b,D_c \in \mathrm{Dir} (M)$ with smooth distributions $\Delta_a = \pr_{TM}(D_a)$, $\Delta_b = \pr_{TM}(D_b)$, and $\Delta_c = \pr_{TM}(D_c)$. Assume that $\Delta_{a} \cap \Delta_{b}$, $\Delta_{b} \cap \Delta_{c}$ and $\Delta_{c} \cap \Delta_{a}$ have locally constant ranks.  Then the Dirac tensor product $\boxtimes$ is associative and commutative; namely we have
	\[
		(D_a \boxtimes D_b) \boxtimes D_c = D_a \boxtimes (D_b \boxtimes D_c)
	\]
	and
	\[
		D_a \boxtimes D_b = D_b \boxtimes D_a.
	\]
\end{proposition}

\begin{proof}
  First we prove commutativity.  Recall that any Dirac structure may be constructed by its associated constraint distribution $\Delta = \pr_{TM}(D)$ and the Dirac two-form $\Omega_\Delta$.  Let $\Omega_a, \Omega_b$,and $\Omega_c$ be the bilinear maps induced by  $D_a,D_b$, and $D_c$ respectively.  Then we find by Proposition \ref{thm:bowtie} that $D_a \boxtimes D_b$ is defined by the smooth distribution $\Delta_{ab} = \Delta_a \cap \Delta_b$ and the bilinear map $\Omega_{\Delta_{ab}} = (\Omega_{\Delta_{a}} + \Omega_{\Delta_{b}})|_{\Delta_{ab}}$.  By commutativity of $+$ and $\cap$, we find the same distribution and the bilinear map for $D_b \boxtimes D_a$, we have $D_a \boxtimes D_b = D_b \boxtimes D_a$.

Next, we prove associativity.  Let
$
	\Delta_{(ab)c} = \pr_{TM}((D_a \boxtimes D_b) \boxtimes D_c)
$
and	
$
\Delta_{a(bc)}=  \pr_{TM}(D_a \boxtimes (D_b \boxtimes D_c))
$
and it follows
\[
  	\Delta_{(ab)c} = (\Delta_a \cap \Delta_b) \cap \Delta_c = \Delta_a \cap (\Delta_b \cap \Delta_c) = \Delta_{a(bc)}.
\]
If $\Omega_{\Delta_{(ab)c}}$ and $\Omega_{\Delta_{a(bc)}}$ are respectively the bilinear maps for $(D_a \boxtimes D_b) \boxtimes D_c$ and $D_a \boxtimes (D_b \boxtimes D_c)$, we find
\begin{align*}
	\Omega_{\Delta_{(ab)c}} &= [(\Omega_{\Delta_{a}} + \Omega_{\Delta_{b}})|_{\Delta_{ab}} + \Omega_{\Delta_{c}} ]|_{\Delta_{(ab)c}} = (\Omega_{\Delta_{a}} + \Omega_{\Delta_{b}} + \Omega_{\Delta_{c}} )|_{\Delta_{(ab)c}} \\
		&= (\Omega_{\Delta_{a}} + \Omega_{\Delta_{b}} + \Omega_{\Delta_{c}}  )|_{\Delta_{a(bc)}} = \Omega_{\Delta_{a(bc)}}.
\end{align*}
  Thus, we obtain
\[
		(D_a \boxtimes D_b) \boxtimes D_c = D_a \boxtimes (D_b \boxtimes D_c).
\]
\end{proof}

\paragraph{Remark.}
  We have shown that the tensor product $\boxtimes$ acts on pairs of Dirac structures with clean intersections to give a new Dirac structure and also that that it is an associative and commutative product.  It is easy to verify that the Dirac structure $D_e = TM \oplus \{0 \}$ satisfies the property of the identity element as $D_e \boxtimes D = D \boxtimes D_e=D$ for every $D \in \mathrm{Dir}(M)$.  However this does \emph{not} make the pair $(\mathrm{Dir}(M), \boxtimes)$ into a commutative category because $\boxtimes$ is not defined on all pairs of Dirac structures.  This is similar to the difficulty of defining a symplectic category (see \cite{Wein2009}).
\medskip

The previous propositions justify the following definition for the ``interconnection'' of Dirac structures
\begin{definition}
	 Let $(D_1,M_1)$ and $(D_2,M_2)$ be Dirac manifolds and let $D_{\mathrm{int}} \in \mathrm{Dir}(M_1 \times M_2)$ be such that $D_{\mathrm{int}}$ and $D_1 \oplus D_2$ have clean intersections.  Then we define the \emph{interconnection} of $D_1$ and $D_2$ through $D_{\mathrm{int}}$ by the tensor product:
	 \[
	 	(D_1 \oplus D_2) \boxtimes D_{\mathrm{int}}.
	 \]
\end{definition}

\paragraph{Interconnections of Induced Dirac Structures.}
Let $Q_{1}$ and $Q_{2}$ be distinct configuration manifolds and let $D_{\Delta_{Q_1}} \in \mathrm{Dir}(T^{\ast}Q_{1})$ and $D_{\Delta_{Q_2}}  \in \mathrm{Dir}(T^{\ast}Q_{2})$ be Dirac structures induced from smooth distributions $\Delta_{Q_{1}} \subset TQ_1$ and $\Delta_{Q_{2}} \subset TQ_2$. Given a smooth distribution $\Sigma_{Q}$ on $Q=Q_{1} \times Q_{2}$, let $\Sigma_{\mathrm{int}}=(T\pi_{Q})^{-1}(\Sigma_{Q})$ and define $D_{\mathrm{int}} = \Sigma_{\mathrm{int}} \oplus \Sigma_{\mathrm{int}}^\circ$. Then it is clear that $D_{\Delta_{Q_1}}  \oplus D_{\Delta_{Q_2}}$ and $D_{\mathrm{int}}$ intersect cleanly if and only if $\Delta_{Q_1} \oplus \Delta_{Q_2}$ and $\Sigma_{Q}$ intersect cleanly. 

\begin{proposition}
 If $\Delta_{Q_1} \oplus \Delta_{Q_2}$ and $\Sigma_{Q}$ intersect cleanly, then the interconnection of $D_{\Delta_{Q_1}}$ and $D_{\Delta_{Q_2}}$ through $D_{\mathrm{int}}$ is locally given by the {\it Dirac structure induced from $(\Delta_{Q_1} \oplus \Delta_{Q_2})  \cap \Sigma_Q$} as, for each $(q,p) \in T^{\ast}Q$,
\begin{align}\label{InducedDirac}
(D_{\Delta_{Q_1}}  \oplus D_{\Delta_{Q_2}}) &\boxtimes D_{\mathrm{int}}(q,p) =\{\, (w, \alpha)
\in T_{(q,p)}T^{\ast}Q  \times T^{\ast}_{(q,p)}T^{\ast}Q \, \mid \,   \nonumber \\
& \hspace{2cm}
w \in \Delta_{T^{\ast}Q}(q,p)\;\;\mbox{and} \;\;
\alpha-\Omega^{\flat}(q,p) \cdot w
\in\Delta_{T^{\ast}Q}^{\circ}(q,p) \, \},
\end{align} 
where $\Delta_{T^{\ast}Q}=T\pi_{Q}^{-1}((\Delta_{Q_1} \oplus \Delta_{Q_2})  \cap \Sigma_Q)$ and $\Omega=\Omega_{1} \oplus \Omega_{2}$, where $\Omega_{1}$ and $\Omega_{2}$ are the canonical symplectic structures on $T^{\ast}Q_{1}$ and $T^{\ast}Q_{2}$.
\end{proposition}
\begin{proof}
It is easily checked from Corollary \ref{cor:bowtie2}.
\end{proof}
 It is simple to generalize the preceding constructions to the interconnection of $n$ distinct Dirac structures, $D_1, \dots, D_n$, on distinct manifolds, $M_1, \dots, M_n$.
 Specifically, by choosing an appropriate interaction Dirac structure, $D_\mathrm{int} \in \mathrm{Dir}(M_1 \times \dots \times M_n)$, we can define the interconnection of $D_{1}, \dots, D_{n}$ through $D_{\mathrm{int}}$ by the Dirac structure
\begin{equation*}\label{InterconnectDirac}
	D=  \left(\bigoplus_{i=1}^{n}{D_i} \right) \boxtimes D_\mathrm{int}.
\end{equation*}

\paragraph{The Link Between Composition and Interconnection of Dirac Structures.}
The notion of \emph{composition} of Dirac structures was introduced in \cite{CeVdeScBa2007} in the context of port-Hamiltonian systems, where the composition was constructed on vector spaces. 
 Let $V_1,V_2$ and $V_s$ be vector spaces.  Let $D_1$ be a linear Dirac structure on $V_1 \oplus V_s$ and $D_2$ be a linear
  Dirac structure on $V_s \oplus V_2$.  The \emph{composition} of $D_1$ and $D_2$ is given by
\begin{align*}
	& D_1 || D_2 = \{ (v_1,v_2,\alpha_1,\alpha_2) \in (V_1 \times V_2) \oplus (V_{1}^{\ast} \times V_{2}^{\ast}) \mid \\
		& \hspace{1cm} \exists (v_s,\alpha_s) \in V_s \oplus V_{s}^{\ast},\;
		\text{such that } (v_1,v_s,\alpha_1,\alpha_s) \in D_1, 
		 (-v_s,v_2,\alpha_s,\alpha_s) \in D_2 \},
\end{align*}
where $V_{1}^{\ast}, V_{2}^{\ast}$ and $V_{s}^{\ast}$ denote the dual space of $V_{1}, V_{2}$ and $V_{s}$. It was also shown that the set $D_1 || D_2$ is itself a Dirac structure on $V_{1} \times V_{2}$, and moreover given shared variables the operation of composition is associative.  However the type of interaction given by composition of Dirac structures is specifically the interaction between systems which have shared variables.  The next theorem shows the link between the notion of composition of Dirac structures and the notion of interconnection of Dirac structures.

\begin{proposition}
\label{thm:composition}
 Set $V = V_1 \times V_s \times V_s \times V_2$ and $\bar{V} = V_1 \times V_2$.  Let $\Psi : V \to \bar{V}$ be the projection $(v_1,v_s,v_s',v_2) \mapsto (v_1,v_2)$.  
Let $\Sigma_{\mathrm{int}} = \{ (v_1,v_s,-v_s,v_2)  \in V \}$ and let $D_{\mathrm{int}} = \Sigma_{\mathrm{int}} \oplus \Sigma_{\mathrm{int}}^\circ$.  For linear Dirac structures $D_1$ on $V_1 \times V_s$ and $D_2$ on $V_s \times V_2$, it follows that
\[
	D_1 || D_2 = \Psi_*(D_1 \oplus D_2) \boxtimes D_{\mathrm{int}}.
\]
\end{proposition}
\noindent 
For the details and the relevant proofs, see \cite{JY2011}.

\section{Interconnection of Implicit Lagrangian Systems}
\label{sec:interconnection}
\paragraph{Modular Decomposition of Physical Systems.}
For design and analysis of complicated mechanical systems, one often decomposes the concerned system into several constituent subsystems so that one can easily understand the whole system as an interconnected system of subsystems. 

In this section, we shall show how a Dirac-Lagrange system can be reconstructed as an interconnected system of torn-apart subsystems through an interaction Dirac structure. 
\smallskip

First recall that given a Lagrangian $L : TQ \to \mathbb{R}$ with a smooth distribution $\Delta_{Q}$ on a configuration manifold $Q$, a Lagrange-Dirac dynamical system $( \mathbf{d}_{D} L,D_{\Delta_{Q}})$ that satisfies the condition
\[
((q(t),p(t),\dot{q}(t),\dot{p}(t)), \mathbf{d}_{D} L(q(t),v(t))) \in D_{ \Delta _Q }(q(t),p(t)),
\]
induces the implicit Lagrange-d'Alembert equations:
$$
\dot{q}=v \in \Delta_{Q}(q), \quad  \dot{p}-\frac{\partial L}{\partial q} \in \Delta_{Q}^{\circ}(q), \quad  p=\frac{\partial L}{\partial v}.
$$
Next, decompose the original system into separate subsystems such that 
$$
Q=Q_{1} \times \cdots \times Q_{n} \quad \text{and} \quad L=\sum_{i=1}^{n}L_{i}: TQ \to \mathbb{R}, 
$$
where $L_i:TQ_i \to \mathbb{R},\;i = 1,\dots,n$ are Lagrangians for separate subsystems.  In particular, we can decompose the original system into subsystems in such a way that the distribution $\Delta_{Q}$ can be expressed by
\begin{equation*}\label{DecomDist}
\Delta_{Q}=(\Delta_{Q_1} \oplus \cdots \oplus \Delta_{Q_n})  \cap \Sigma_Q,
\end{equation*}
where $\Delta_{Q_{i}} \subset TQ_{i}$ are smooth constraint distributions for subsystems and $\Sigma_Q \subset TQ$ denotes some constraint distribution due to the interactions at the {\it boundaries} between subsystems.

\paragraph{Tearing into Primitive Subsystems.} 
In the above modular decomposition, we assume that the intersection $(\Delta_{Q_1} \oplus \cdots \oplus \Delta_{Q_n})  \cap \Sigma_Q$ is {\it clean}, namely, the rank of $\Sigma_Q$ is locally constant. 

Without the interaction constraint $\Sigma_{Q}$, the separate subsystems maybe regarded as a set of totally torn-apart systems, each of which is called a {\bfi primitive subsystem}.
\begin{definition}
   Let $Q = Q_1 \times \dots \times Q_n$ and $L_i : TQ_i \to \mathbb{R}$.  For Dirac structures $D_{\Delta_{Q_i} } \in \mathrm{Dir}( T^{\ast}Q_i)$ and interaction forces $F_i : TQ \to T^{\ast}Q_i$, we call each triple $(D_{\Delta_{Q_i}} , \mathbf{d}_{D} L, F_i)$ a \emph{primitive Lagrange-Dirac system} for $i = 1,\dots, n$.  We call the equations of motion given by the condition
\begin{equation*}
\left( (q_{i},p_{i},\dot{q}_{i},\dot{p}_{i}), \mathbf{d}_{D} L_{i}(q_{i},v_{i})-\pi_{Q_{i}}^{\ast}F_{i}(q,v) \right) \in D_{ \Delta _{Q_{i}} }\left( q_{i},p_{i} \right),
\end{equation*} 
the \emph{primitive Lagrange-d'Alembert equations}.
\end{definition}

The primitive Lagrange-d'Alembert equations are locally given by
\begin{equation}\label{SubILS}
\dot{q}_{i}=v_{i} \in \Delta_{Q_{i}}(q_{i}), \quad  \dot{p}_{i}-\frac{\partial L_{i}}{\partial q_{i}}-F_{i} \in \Delta_{Q_{i}}^{\circ}(q_{i}), \quad  p_{i}=\frac{\partial L_{i}}{\partial v_{i}},
\end{equation}

Note that equations of motion in \eqref{SubILS} are {\it not} equivalent to the equations for the original system $(\mathbf{d}_{D}L,D_{\Delta_{Q}})$ unless we know how to explicitly choose the correct interaction force $F$.
As the appropriate force $F$ which produces the dynamics of the interconnected system is usually only defined implicitly (e.g. as a Lagrange multiplier of a constraint) equation \eqref{SubILS} is usually not available to us.
In fact, for a fixed $i$ the primitive Lagrange-d'Alembert equations are not well defined unless one is given the velocities of all of the other systems.  In other words, when we reconstruct the original system $(\mathbf{d}_{D}L,D_{\Delta_{Q}})$ from the torn-apart primitive Lagrange Dirac systems $(\mathbf{d}_{D} L_{i}, F_{i}, D_{\Delta_{Q_{i}}})$, which will be later given by a interaction Dirac structure, we shall need to impose extra constraints on the velocities and forces at the boundaries between the primitive systems. In the following, we shall show such constraints can be given by an interaction Dirac structure. 

\paragraph{Interaction Forces.} 
Before going into details on the interconnection of subsystems, we define a {\bfi total interaction force field} $F=(F_{1},..., F_{n}): TQ \to T^{\ast}Q$ given by interaction forces $F_i : TQ \to T^{\ast}Q_i$ such that the {\bfi power invariance} through the interacting boundaries between the subsystems holds.  If the interconnection structure is given by a constraint distribution $\Sigma_Q$ then the forces must satisfy,
$$
\left< F(q,v), v\right>= \sum_{i=1}^{n}\left< F_{i}(q,v), v_{i}\right> =0.
$$ 
for each $v \in \Sigma_Q$.
In other words, $F(q,v) \in \Sigma_{Q}^{\circ}(q)$ where $\Sigma_{Q}^{\circ}$ is the annihilator of $\Sigma_{Q}$.

In the next section we will consider dynamics which evolve on the phase space $T^{\ast}Q$ so that the forces occur on the iterated cotangent bundle $T^{\ast} T^{\ast}Q$.
In order to accommodate this larger space, recall that a force field $F: TQ \to T^{\ast}Q$ induces a horizontal lift as, for each $(q,v) \in TQ$,
\[
\pi_{Q}^{\ast}F(q,v) \cdot w=\left< F(q,v), T\pi_{Q}(w) \right> \text{ for all } w \in TT^{\ast}Q,
\]
where the horizontal lift  $\pi_{Q}^{\ast}F(q,v)$  
is locally given by $\pi_{Q}^{\ast}F(q,v)=(q,p,F_{v_{q}},0) \in T^{\ast}_{(q,p)}(T^{\ast}Q)$.

 \paragraph{Interconnection of Dirac Structures.} 
In order to formulate the original physical system as an interconnected system, one needs to connect each Dirac structure, $D_{\Delta_{Q_{i}}}$, through the interaction Dirac structure $D_{\text{int}}$. In particular, if $D_{\rm int}$ is defined from a smooth distribution $ \Sigma_{Q}$, we recall that the interaction Dirac structure may be given by, as in \eqref{IntDirac},
$$
D_{\mathrm{int}}=  \pi_{Q}^{\ast}D_{Q}=\pi_{Q}^{\ast}(\Sigma_{Q} \times \Sigma_{Q}^{\circ}).
$$
Recall from equation \eqref{InducedDirac} that the interconnection of separate Dirac structures is given  through the interaction Dirac structure by 
\begin{align*}
D_{\Delta_{Q}}:=(D_{\Delta_{Q_{1}}} \oplus \cdots \oplus D_{\Delta_{Q_{n}}})  \boxtimes D_{\mathrm{int}}.
\label{eq:interconnected_dirac}
\end{align*}
\paragraph{Interconnection of Primitive Lagrange-Dirac Systems.}
We will consider the process of interconnecting separate Dirac structures, which allows us to couple the dynamics of primitive subsystems via the interaction Dirac structure. 

\begin{definition}
Let $(\mathbf{d}_{D} L_{i}, F_{i},D_{\Delta_{Q_{i}}})$ be $n$ distinct Lagrange-Dirac dynamical systems for $i=1,...,n$. Given a smooth distribution $\Sigma_{Q}$ on $Q = Q_1 \times \cdots \times Q_n$, the {\bfi interconnection of primitive Lagrange-Dirac systems} $(\mathbf{d}_{D} L_{i}, F_{i}, D_{\Delta_{Q_{i}}})$ is given by, for $i = 1,\dots n$, 
$$
\left( (q_{i},p_{i},\dot{q}_{i},\dot{p}_{i}) , \mathbf{d}_{D} L_{i}( q_{i},v_{i} )-\pi_{Q_{i}}^{\ast}F_{i}(q,v) \right) \in D_{ \Delta _{Q_{i}} }\left( q_{i},p_{i} \right),
$$
together with the interaction constraints 
$$
\left( (\dot{q}_{1},...,\dot{q}_{n}), \left(F_{1}(q,v), \cdots, F_{n}(q,v \right)\right) \in D_{Q}(q_{1},...,q_{n}).
$$
\end{definition}

\begin{proposition}
\label{prop:LDDS_splitting}
The following statements are equivalent:
  \begin{enumerate}
        \item[(i)] The curves $(q_{i},v_{i},p_{i}) \in TQ_{i} \oplus T^{\ast}Q_{i}$ satisfy
	\begin{align*}
((q_{i},p_{i},\dot{q}_{i},\dot{p}_{i}), \mathbf{d}_{D} L_{i}(q_{i},v_{i})-\pi_{Q_{i}}^{\ast}F_{i}(q,v)) \in D_{ \Delta _{Q_{i}} }(q_{i},p_{i}),
	\end{align*}
        for $i = 1,\dots n$, together with the constraints 
$$
\left(\dot{q}_{1},...,\dot{q}_{n}), \left(F_{1}(q,v), \cdots, F_{n}(q,v\right)\right) \in D_{Q}(q_{1},...,q_{n}).
$$
%
        \item[(ii)] The curve $(q,v,p) \in TQ \oplus T^{\ast}Q$ satisfies
	\begin{align*}
((q,p,\dot{q},\dot{p}), \mathbf{d}_{D} L(q,v)) \in D_{ \Delta _Q }(q,p).
	\end{align*}
\end{enumerate}
\end{proposition}

\begin{proof}
  Assuming (i), one can obtain the Lagrange-Dirac dynamical system as
\[
\left(\left(\dot{q},\dot{p}\right), \left(-\frac{\partial L}{\partial q}, v \right)\right) \in D_{\Delta_{Q}}(q,p),
\]
while it follows from $D_{\Delta_{Q}}=(D_{\Delta_{Q_{1}}} \oplus \cdots \oplus D_{\Delta_{Q_{n}}})  \boxtimes D_{\mathrm{int}}$ that there may exist some $\alpha=(\alpha_q,\alpha_p) \in T^*T^*Q$ such that
\begin{align}
\left(\left(\dot{q}, \dot{p}\right), \left(\alpha_q,\alpha_p \right)\right) \in D_{\mathrm{int}} \label{eq:hoj1}
\end{align}
and
\begin{align}
\left(\left(\dot{q},\dot{p}\right),\left(- \pder{L}{q}-\alpha_q,  v-\alpha_p\right)\right) \in D_{\Delta_{Q_{1}}} \oplus \cdots \oplus D_{\Delta_{Q_{n}}}. \label{eq:hoj2}
\end{align}
Equation \eqref{eq:hoj1} implies $\dot{q} \in \Sigma_{Q}(q), \alpha_q \in \Sigma_{Q}^\circ(q)$ and $\alpha_p = 0$.  This means that $\alpha$ is the horizontal lift of $F(q,v) = \alpha_q$.  The interaction forces $F(q,v) \in \Sigma_{Q}^{\circ}$ can be decomposed into $F_{i}(q,v) \in \Sigma_{Q_{i}}^{\circ}, \, i=1,...,n$, such that $F(q,v) = (F_1(q,v), ..., F_n(q,v))$. In view of the definition of the direct sum of Dirac structures and $L=\sum_{i=1}^{n}L_{i}$, we see that equation \eqref{eq:hoj2} implies 
\[
\left(\left(\dot{q}_i, \dot{p}_i \right) , \left(-\pder{L_{i}}{q_{i}} - F_i , v\right)\right) \in D_{ \Delta _{Q_{i}} }(q_i,p_i).
\]
However, this implies
\[
((q_{i},p_{i},\dot{q}_{i},\dot{p}_{i}), \mathbf{d}_{D} L_{i}(q_{i},v_{i})-\pi_{Q_{i}}^{\ast}F_{i}(q,v)) \in D_{ \Delta _{Q_{i}} }(q_{i},p_{i})
\]
for $i = 1,\dots,n$, together with the conditions
\begin{align*}
(\dot{q}_{1},...,\dot{q}_{n}) \in \Sigma_{Q}(q_{1},...,q_{n}) \;
\textrm{and}\;(F_{1}(q,v), \cdots, F_{n}(q,v)) \in \Sigma_{Q}^{\circ}(q_{1},...,q_{n}).
\end{align*}
We may reverse these steps to prove equivalence.
\end{proof}

As shown in the above, the interconnection of  $n$ distinct Lagrange-Dirac dynamical systems $(\mathbf{d}_{D} L_{i}, F_{i},D_{\Delta_{Q_{i}}})$ through $D_{\rm int}$ is equivalent with the Lagrange-Dirac dynamical system $(\mathbf{d}_{D} L,D_{\Delta_{Q}})$. 

\paragraph{Variational Structures for Interconnected Systems.}
Here, we consider the Lagrange-d'Alembert-Pontryagin variational structure for the interconnection of $n$ implicit Lagrangian subsystems.

\begin{definition}
The Lagrange-d'Alembert-Pontryagin principle for the interconnected mechanical systems is given for $i = 1 , \dots, n$ by
\begin{align}\label{LagdAlemPon1}
\delta \int_{t_1}^{t_2} L_{i}(q_{i}(t),v_{i}(t))&+\left< p_{i}(t), \dot{q}_{i}(t)-v_{i}(t) \right>  \,dt \\
&+ \int_{t_1}^{t_2} \left<F_{i}(q(t),v(t)), \delta q_{i}(t) \right>dt=0, \nonumber
\end{align}
for curves $(q_i(t),v_i(t),p_i(t)) \in TQ_i \oplus T^{\ast}Q_{i},\;t \in [t_1, t_2]$ with variations $\delta q_{i}(t) \in \Delta_{Q_{i}}(q_{i}(t))$ with fixed end points, arbitrary variations $\delta v_{i},\delta p_{i}$ and with $\dot{q}_{i}(t) \in \Delta_{Q_{i}}(q_{i}(t))$, and the condition
\begin{align}\label{CondInterconnection}
(\dot{q}_{1},...,\dot{q}_{n}) \in \Sigma_{Q}(q_{1},...,q_{n}) \;
\textrm{and}\;(F_{1}(q,v), \cdots, F_{n}(q,v)) \in \Sigma_{Q}^{\circ}(q_{1},...,q_{n}).
\end{align}
\end{definition}
\begin{proposition}
The interconnection of the Lagrange-d'Alembert-Pontryagin structures through $\Sigma_Q$ given in \eqref{LagdAlemPon1} and \eqref{CondInterconnection} for curves $(q_{i}(t),$ $v_{i}(t),p_{i}(t))$ in $TQ_{i} \oplus T^{\ast}Q_{i}, i = 1,\dots,n$ is equivalent to the Lagrange-d'Alembert-Pontryagin principle for the interconnected mechanical system is equivalent with the following one:
\begin{equation}\label{LagdAlemPon2}
\begin{split}
\delta \int_{t_1}^{t_2}
L(q(t),v(t)) + \left< p(t), \dot{q}(t)-v(t) \right>
\,dt=0,
\end{split}
\end{equation}
for a curve $(q(t),v(t),p(t))$ in $TQ \oplus T^{\ast}Q$ with variations $\delta{q}(t) \in \Delta_{Q}(q(t)) \subset T_{q(t)}Q$ with fixed endpoints, arbitrary unconstrained variations $\delta{v}(t)$ and $\delta{p}(t)$, and $\dot{q}(t) \in \Delta_{Q}(q(t)) \subset T_{q(t)}Q$. 
\end{proposition}
\begin{proof}
It follows from \eqref{LagdAlemPon1} that
\begin{equation}
\begin{split}\label{IntLagdAlemPonEq}
\dot{q}_{i}=v_{i} \in \Delta_{Q_{i}}(q_{i}), \quad  \dot{p}_{i}-\frac{\partial L_{i}}{\partial q_{i}}-F_{i} \in \Delta^{\circ}_{Q_{i}}(q_{i}),\quad p_{i}=\frac{\partial L_{i}}{\partial v_{i}},\quad i=1,...,n.
\end{split}
\end{equation}
Recall that the distribution $(\Delta_{Q_{1}} \times \cdots \times \Delta_{Q_{n}})(q_{1},\cdots,q_{n})=\Delta_{Q_{1}}(q_{1}) \times \cdots \times \Delta_{Q_{n}}(q_{n}) \subset TQ$ has the annihilator
$
(\Delta_{Q_{1}} \times \cdots \times \Delta_{Q_{n}})^{\circ}(q_{1},...,q_{n})=\Delta_{Q_{1}}^{\circ}(q_{1}) \times  \dots \times \Delta_{Q_{n}}^{\circ}(q_{n}),
$
and impose the additional constraints 
$$
(\dot{q}_{1},...,\dot{q}_{n}) \in \Sigma_{Q}(q_{1},...,q_{n}) \quad \textrm{and} \quad (F_{1}(q,v),...,F_{n}(q,v)) \in \Sigma_{Q}^{\circ}(q_{1},...,q_{n}), 
$$ 
one can develop the equations
\begin{align*}
(\dot{q}_{1},...,\dot{q}_{n}) = (v_{1},...,v_{n}) \in \Delta_{Q}(q_{1},...,q_{n}), \quad
\left( \dot{p}_{1}-\frac{\partial L_{1}}{\partial q_{1}},..., \dot{p}_{n}-\frac{\partial L_n}{\partial q_{n}} \right) \in \Delta_{Q}^{\circ}(q_{1},...,q_{n}),
\end{align*}
together with the Legendre transformation
$$
\left( p_{1}, ..., p_{2}\right) = \left(\frac{\partial L_{1}}{\partial v_{1}}, ..., \frac{\partial L_{2}}{\partial v_{2}} \right),
$$
where $\Delta_{Q}(q_{1},...,q_{n}) =(\Delta_{Q_{1}} \times \cdots \times \Delta_{Q_{n}})(q_{1},...,q_{n}) \cap \Sigma_{Q}(q_{1},...,q_{n}) \subset TQ$ is the final distribution and its annihilator is given by 
\[
\Delta^{\circ}_{Q}(q_{1},...,q_{n})=(\Delta_{Q_{1}} \times \cdots \times \Delta_{Q_{n}})^{\circ}(q_{1},...,q_{n})  +\Sigma_{Q}^{\circ}(q_{1},...,q_{n}).
\]
  Reflecting upon the last group of equations, one obtains the Lagrange-d'Alembert-Pontryagin equations \eqref{IntLagdAlemPonEq}, which can be also derived from the Lagrange-d'Alembert-Pontryagin principle in \eqref{LagdAlemPon2}.  The converse is proven by reversing the above arguments to prove the existence of the interaction forces $F_1,\dots,F_n$.
\end{proof}
It was already shown in \cite{YoMa2006b} that Lagrange-Dirac dynamical systems satisfy the Lagrange-d'Alembert-Pontryagin.  In Proposition \ref{prop:LDDS_splitting}, we illustrated how the equations for a Lagrange-Dirac dynamical system are coupled by an interaction Dirac structure of the form $D_{\mathrm{int}} = \Sigma_{\mathrm{int}} \oplus \Sigma^\circ_{\mathrm{int}}$ by introducing constraint forces.  The same constraint forces allow us to rewrite the Lagrange-d'Alembert-Pontryagin for an interconnected system as a set of the Lagrange-d'Alembert-Pontryagin principles for the separate primitive subsystems. 
\medskip

We can summarize these results in the following theorem:
  
\begin{theorem}
\label{thm:LDDS_splitting}
  Assume the same setup as Proposition \ref{prop:LDDS_splitting} and let $(q,v,p)(t),\; t \in [t_1,t_2]$ be a curve in $TQ \oplus T^{\ast}Q$. Then, the following statements are equivalent:
  \begin{enumerate}
  	\item[(i)] The curve $(q,v,p)$ satisfies
	\begin{align*}
	((q,p,\dot{q},\dot{p}), \mathbf{d}_{D} L(q,v)) \in D_{ \Delta _Q }(q,p).
	\end{align*}
	\item[(ii)] There exists some constraint force field $F_i: TQ \to T^{\ast}Q_i$ such that the curves $(q_{i},v_{i},p_{i})(t) \in TQ_i \oplus T^{\ast}Q_i$ satisfy
	\begin{align*}
((q_{i},p_{i},\dot{q}_{i},\dot{p}_{i}), \mathbf{d}_{D} L_{i}(q_{i},v_{i})-\pi_{Q_{i}}^{\ast}F_{i}(q,v)) \in D_{ \Delta _{Q_{i}} }(q_{i},p_{i}),
	\end{align*}
        for $i = 1,\dots n$, together with
        $
        (\dot{q}_{1},...,\dot{q}_{n}) \in \Sigma_{Q}(q_{1},...,q_{n})
        $
        and
        $$
        (F_{1}(q,v),..., F_{n}(q,v)) \in \Sigma_{Q}^{\circ}(q_{1},...,q_{n}).
        $$

	\item[(iii)] The curve $(q,v,p)(t)$ satisfies the Lagrange-d'Alembert-Pontryagin principle:
	\[
		\delta \int_{t_1}^{t_2}{ L(q,v) + \lb p , \dot{q} - v \rb dt} = 0
	\]
	with respect to chosen variations $\delta q(t) \in \Delta_{Q}(q(t))$ with fixed endpoints, $\delta v, \delta p$ arbitrary, and the constraint $\dot{q}(t) \in \Delta_{Q}(q(t))$.
	
	\item[(iv)] The curves $(q_{i},v_{i},p_{i})(t) \in TQ_i \oplus T^{\ast}Q_i$ satisfy the Lagrange-d'Alembert-Pontryagin principles:
	\[
		\delta \int_{t_1}^{t_2}{ L_i(q_i,v_i) + \lb p_i, \dot{q}_i - v_i \rb dt} + \int_{t_1}^{t_2}{ \lb F_i , \delta q \rb dt} = 0,
	\]
        for $i = 1,\dots,n$, together with
        $
        (\dot{q}_{1},...,\dot{q}_{n}) \in \Sigma_{Q}(q_{1},...,q_{n})
        $
        and
        $$
        (F_{1}(q,v),..., F_{n}(q,v)) \in \Sigma_{Q}^{\circ}(q_{1},...,q_{n}).
        $$
\end{enumerate}
\end{theorem}

\section{Examples}
\label{sec:examples}

  The unifying theme of interconnection is that we often find ourselves in a situation where we have a number of systems which we understand well (such as the components of a circuit or a rigid body),
  while the interconnected system is less understood.
  Therefore the concept of interconnection is useful because it allows us to use our previous knowledge of the subsystems to construct the interconnected system.
  These interconnections can be, geometrically speaking, quite sophisticated (e.g. interconnection by nonholonomic constraints).
  In this section, we provide some examples of interconnection of Lagrange-Dirac dynamical systems.
  We have chosen simple examples to illustrate the essential ideas of interconnection concretely.
  However, the method of tearing and interconnecting subsystems can extend to more complicated systems.
  
\medskip

\noindent
{\bf (I) A Mass-Spring Mechanical System.}\\
\noindent
Consider a mass-spring system as in Figure \ref{Mass-Spring}. Let $m_{i}$ and $k_{i}$ be the $i$-th mass and spring for $i=1,2,3$.

\begin{figure}[h]
\begin{center}
\includegraphics[scale=.5]{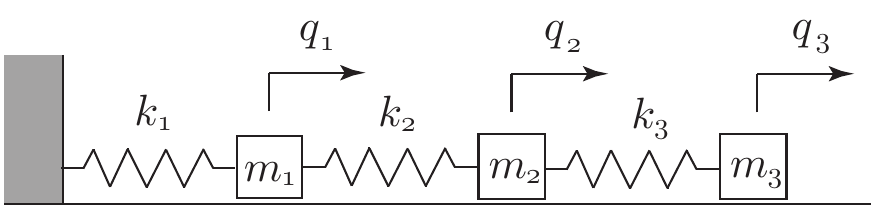}
\caption{A Mass-Spring System}
\label{Mass-Spring}
\end{center}
\end{figure}
\paragraph{Tearing and Interconnecting.}
Inspired by the concept of {\it tearing and interconnecting systems} developed by \cite{Kr1963}, the mass-spring mechanical system can be torn apart into two distinct subsystems called ``primitive systems''as in Figure \ref{TornApart}.  The procedure of tearing inevitably yields {\it interactive boundaries}, through which the energy flows between the primitive subsystem 1 and the primitive subsystem 2.  Upon tearing, the separate primitive systems obey the following condition at the interaction boundaries:
\begin{equation}\label{continuity}
f_{2}+\bar{f}_{2}=0, \qquad \dot{q}_{2}=\dot{\bar{q}}_{2}.
\end{equation}
In the above, $\dot{q}_2$ and $\dot{\bar{q}}_2$ are the associated velocities to the boundaries, while $f_{2}$ and $\bar{f}_{2}$ are the interaction forces.  We call equation \eqref{continuity} the {\it continuity condition}.  Without the continuity condition, there exists no energy interaction between the primitive subsystems.
\begin{figure}[h]
\begin{center}
\includegraphics[scale=.5]{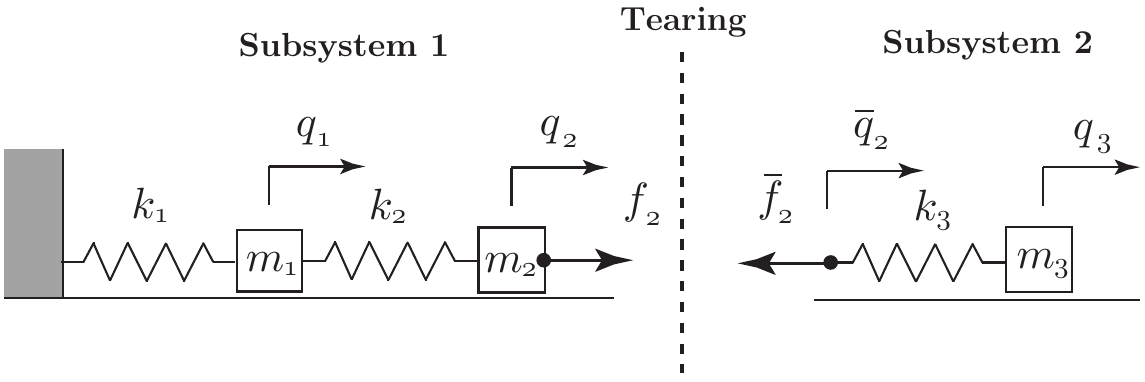}
\caption{Torn-apart Systems}
\label{TornApart}
\end{center}
\end{figure}
In other words, {\it the original mechanical system can be recovered by interconnecting the primitive subsystems with the continuity conditions}.
\medskip

Equation \eqref{continuity} implies that {\it power invariance} holds:
\[
\left<f_{2}, \dot{q}_{2} \right> +\left<\bar{f}_{2}, \dot{\bar{q}}_{2} \right> =0.
\]
Needless to say, the above equation may be understood by an interaction Dirac structure as shown later. 

\paragraph{Lagrangians for Primitive Systems.}
Let us consider how dynamics of the primitive systems can be formulated as forced Lagrange-Dirac dynamical systems. 
\medskip

The configuration space of the primitive system 1 may be given by $Q_{1}=\mathbb{R} \times \mathbb{R}$ with local coordinates $(q_{1},q_{2})$, while the configuration space of the primitive system 2 is $Q_{2}=\mathbb{R} \times \mathbb{R}$ with local coordinates $(\bar{q}_{2},q_{3})$. We can invoke the canonical Dirac structures $D_{TQ_{1}} \in \mathrm{Dir}(T^{\ast}Q_{1})$ and $D_{TQ_{2}} \in \mathrm{Dir}(T^{\ast}Q_{2})$ in this example.  For Subsystem 1, the Lagrangian $L_{1}:TQ_{1}\to \mathbb{R}$ is given by, for $(q_{1},q_{2},v_{1},v_{2}) \in TQ_{1}$,
\[
\begin{split}
L_{1}(q_{1},q_{2},v_{1},v_{2})=\frac{1}{2} m_{1}v_{1}^{2}+\frac{1}{2}m_{2}v_{2}^{2} - \frac{1}{2}k_{1}{q}_{1}^{2}-\frac{1}{2}k_{2}(q_{2}-q_{1})^{2},
\end{split}
\] 
while the Lagrangian $L_{2}:TQ_{2}\to \mathbb{R}$ for the primitive system 2 is given by, for $(\bar{q}_{2},q_{3},\bar{v}_{2},v_{3}) \in TQ_{2}$,
\[
\begin{split}
L_{2}(\bar{q}_{2},q_{3},\bar{v}_{2},v_{3})=\frac{1}{2} m_{3}v_{3}^{2}- \frac{1}{2}k_{3}({q}_{3}-\bar{v}_{2})^{2}.
\end{split}
\] 

When viewing each system separately, the constraint force acts as an external force on each primitive system. Again, this is because {\it tearing always yields constraint forces at the boundaries associated with the disconnected primitive systems}, as shown in Figure \ref{TornApart}
\medskip

\noindent
{\bf Primitive System 1.}
  Given an interaction force $F_{1}: TQ \to T^*Q_1$, we can formulate equations of motion for the Lagrange-Dirac dynamical system $(\mathbf{d}_{D}L_{1}, F_{1},D_{TQ_{1}})$ by
\begin{equation}\label{prim1}
\begin{split}
\dot{q}_{1}=v_{1},\quad  \dot{q}_{2}=v_{2},\quad \dot{p}_{1}=-k_{1}q_{1}-k_{2}(q_{1}-q_{2}),\quad
\dot{p}_{2}=k_{2}(q_{1}-q_{2})+f_{2}(q,v),
\end{split}
\end{equation}
together with $p_{1}=m_{1}v_{1}$ and $p_{2}=m_{2}v_{2}$ and 
\[
	F_{1}(q,v) = (q_{1},q_{2},0,f_{2}(q,v)).
\]
where $(q,v) = (q_1,q_2,\bar{q}_2,q_3,v_1,v_2, \bar{v}_2, v_3 ) \in TQ$.  This implicit Lagrange-d'Alembert equation is well defined when we are given $(q_2(t),v_2(t)) \in TQ_2$.
\medskip

\noindent
{\bf Primitive System 2.}
Similarly, by introducing an interaction force, $F_2 : TQ \to T^*Q_2$, on the port variable $\bar{q}_2$ we can also formulate equations of motion for the Lagrange-Dirac dynamical system $(\mathbf{d}_{D}L_{2}, F_{2},D_{TQ_{2}})$ by
\begin{equation}\label{prim2}
\begin{split}
\dot{\bar{q}}_{2}=\bar{v}_{2},\quad
\dot{q}_{3}=v_{3},\quad
\dot{\bar{p}}_{2}=k_{3}(q_{3}-\bar{q}_{2})+\bar{f}_{2},\quad
\dot{p}_{3}=-k_{3}(q_{3}-\bar{q}_{2}),
\end{split}
\end{equation}
together with 
\[
	F_2(q,v) = (\bar{q}_2,q_3,\bar{f}_2(q,v),0),
\]
and the primary constraints $\bar{p}_{2}=0$ and $p_{3}=m_{3}v_{3}$ as well as the consistency condition, $\dot{\bar{p}}_{2}=0$, where $(q,v) = (q_1,q_2,\bar{q}_2,q_3,v_1,v_2, \bar{v}_2, v_3) \in TQ$.  Again, this implicit Lagrange-d'Alembert equation is well defined when we are given $(q_1(t) , v_1(t) ) \in TQ_1$.
\medskip

In the next paragraph, we will interconnect these separate primitive systems to reconstruct the original mass-spring system through an interaction  Dirac structure.

\paragraph{Interconnection of Separate Dirac Structures.} 
Let $Q=Q_{1} \times Q_{2} = \mathbb{R} \times \mathbb{R} \times \mathbb{R} \times \mathbb{R}$ be an {\bfi extended configuration space} with local coordinates $q=(q_{1},q_{2},\bar{q}_{2},q_{3})$. Recall that the direct sum of the induced Dirac structures is given by $D_{TQ_{1}} \oplus D_{TQ_{2}}$ on $T^{\ast}Q$. The constraint distribution due to the interconnection is given by 
\[
\Sigma_{Q}(x)= \{ v \in T_{x}Q \mid \left<\omega_{Q}(x), v \right>=0 \},
\]
where $\omega_{Q}=d{q}_{2}-d\bar{q}_{2}$ is a one-form on $Q$. On the other hand, the annihilator $\Sigma_{Q}^{\circ} \subset T^{\ast}Q$ is defined by
\[
\begin{split}
\Sigma_{Q}^{\circ}(q)=\{ f=(f_{1},f_{2},\bar{f}_{2},f_{3}) \in T^{\ast}_{x}Q \mid 
\; \left<f, v\right>=0 \;\mbox{and}\; v \in \Sigma_{Q}(x) \}.
\end{split}
\]
It follows from this codistribution that
$f_{2}=-\bar{f}_{2}$, $f_{1}=0$ and $f_{3}=0$.
Hence, we obtain the conditions for the interconnection given by \eqref{continuity}; namely, $f_{2}+\bar{f}_{2}=0$ and $v_{2}=\bar{v}_{2}$. 
Let  $\Sigma_{\mathrm{int}}=(T\pi_{Q})^{-1}(\Sigma_{Q}) \subset TT^{\ast}Q$ and let $D_{\mathrm{int}}$ be defined as in \eqref{IntDirac}. Finally we derive the interconnected Dirac structure $D_{\Delta_{Q}}$  on $T^{\ast}Q$ given by
\[
 D_{\Delta_{Q}}=(D_{TQ_{1}} \oplus D_{TQ_{2}})  \boxtimes D_{\mathrm{int}}.
\]

\paragraph{Interconnection of Primitive Systems.} 
Now, let us see how decomposed primitive systems can be interconnected to recover the original mechanical system.  Define the Lagrangian $L:TQ\to \mathbb{R}$ for the interconnected system by 
$
L=L_{1}+L_{2}.
$
Let $\Delta_{Q}=(TQ_{1} \times TQ_{2}) \cap \Sigma_{\mathrm{int}}$. Then, equations of motion for the interconnected Lagrange-Dirac dynamical system may be given by a set of equations \eqref{prim1}, \eqref{prim2} and \eqref{continuity}, which are finally given in matrix by 
\begin{align*}
&\left(
\begin{array}{cccc|cccc}
0 & 0 & 0 & 0 & -1 & 0 & 0 & 0 \\
0 & 0 & 0 & 0 & 0 & -1 & 0 & 0 \\
0 & 0 & 0 & 0 & 0 & 0 & -1 & 0 \\
0 & 0 & 0 & 0 & 0 & 0 & 0 & -1 \\
\hline \vspace{-3mm} \\
1 & 0 & 0 & 0 & 0 & 0 & 0 & 0 \\
0 & 1 & 0 & 0 & 0 & 0 & 0 & 0 \\
0 & 0 & 1 & 0 & 0 & 0 & 0 & 0 \\
0 & 0 & 0 & 1 & 0 & 0 & 0 & 0 \\
\end{array}
\right)
\left(
\begin{array}{c}
\dot{q}_{1} \\
\dot{q}_{2} \\
\dot{\bar{q}}_{2} \\
\dot{q}_{3} \vspace{1mm} \\
\hline \vspace{-3mm} \\
\dot{p}_{1} \\
\dot{p}_{2} \\
\dot{\bar{p}}_{2} \\
\dot{p}_{3} \\
\end{array}
\right) 
=
\left(
\begin{array}{c}
k_{1}x_{1}-k_{2}(q_{2}- q_{1}) \vspace{1mm} \\
k_{2}x_{2}\\
-k_{3}(q_{3}-\bar{q}_{2})\vspace{1mm} \\
k_{3}(q_{3}-\bar{q}_{2})\vspace{1mm} \\
\hline \vspace{-3mm} \\
v_1 \\
v_2 \\
\bar{v}_{2} \\
v_{3} \\
\end{array}
\right)+
\left(
\begin{array}{cc}
0 \\
  -1 \\
  1 \\
  0 \\
\hline \vspace{-3mm} \\
 0 \\
  0 \\
  0 \\
  0 \\  
\end{array}
\right)
f_{2},
\end{align*}
together with the Legendre transformation $p_{1}=m_{1}v_{1}, p_{2}=m_{2}v_{2}, \bar{p}_{2}=0, p_{3}=m_{3}v_{3}$,
the interconnection constraint $v_{2}=\bar{v}_{2}$,
as well as the consistency condition
$\dot{\bar{p}}_{2}=0$.
\medskip

\noindent
{\bf (II) Electric Circuits}\\
\noindent
Consider the electric circuit depicted in Figure \ref{vrlc_circuit_pic}, where $R$ denotes a resistor, $L$ an inductor, and $C$ a capacitor. 

\begin{figure}[h] 
   \centering
\includegraphics[scale=.8]{./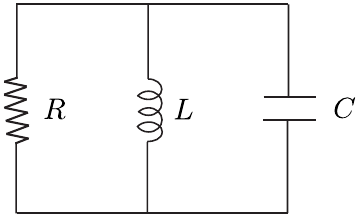} 
   \caption{R-L-C Circuit}
   \label{vrlc_circuit_pic}
\end{figure}

As in Figure \ref{torn_circuit_pic}, we decompose the circuit into two disconnected primitive systems.  Let $S_{1}$ and $S_{2}$ denote external ports resulting from the tear. In order to reconstruct the original circuit  in Figure \ref{vrlc_circuit_pic}, the external ports may be connected by equating currents across each.  
\begin{figure}[h] 
   \centering
\includegraphics[scale=.8]{./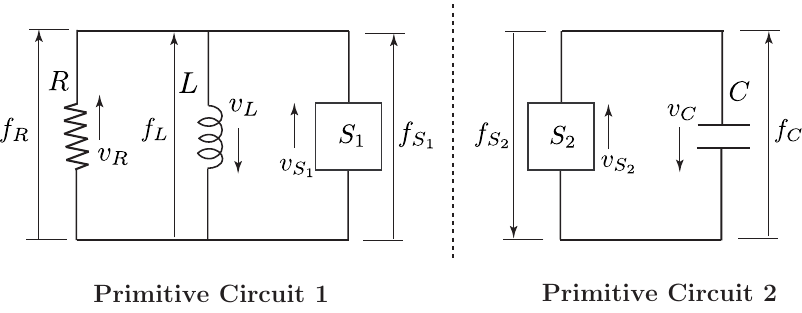} 
   \caption{Primitive Circuits}
   \label{torn_circuit_pic}
\end{figure}

\paragraph{Primitive System 1.}
The configuration space for the primitive system 1 is denoted by $Q_{1}=\mathbb{R}^3$ with local coordinates
$q_1 =(q_{R}, q_{L}, q_{S_1})$, where $q_R, q_L$ and $q_{S_{1}}$ are the charges associated to the resistor $R$, inductor $L$ and port $S_{1}$.  Kirchhoff's circuit law is enforced  by applying a constraint distribution $\Delta_{Q_{1}} \subset TQ_{1}$, which is given by, for each $q_1 =(q_{R}, q_{L}, q_{S_1}) \in Q_{1}$,
\[
	\Delta_{Q_{1}}(q_{1}) = \{ v_{1}=(v_{R}, v_{L}, v_{S_1}) \in T_{q_{1}}Q_1 \mid  v_R - v_L
	 - v_{S_1} = 0 \},
\]
where $v_{1}=(v_{R}, v_{L}, v_{S_1})$ denotes the current vector at each $q_{1}$,
while the KVL constraint is given by its annihilator $\Delta_{Q_{1}}^\circ$, which is given by, for each $q_1 =(q_{R}, q_{L}, q_{S_1}) \in Q_{1}$,
\[
	\Delta_{Q_{1}}^{\circ}(q_{1}) = \{ f_{1}=(f_{R}, f_{L}, f_{S_1}) \in T_{q_{1}}^{\ast}Q_1 \mid  f_R = f_L= f_{S_1}  \}.
\]

Then, we can naturally define the induced Dirac structure $D_{\Delta_{Q_{1}}}$ on $T^{\ast}Q_{1}$ from $\Delta_{Q_{1}}$ as before.
\medskip

For the primitive circuit 1, the Lagrangian $\mathcal{L}_{1}$ on $TQ_{1}$ is given by
\[
	\mathcal{L}_1(q_1,v_1) = \frac{1}{2} L_{1}v_L^2,
\]
which is degenerate. The voltage associated to the resistor $R$ may be given by
\[
	f_{R}(q_R,v_R) = (q_R, -R v_R),
\]
while the voltage associated to the port $S_1$ is denoted by $f_{S_{1}}(q_{S_{1}},v_{S_{1}})dq_{S_{1}}$.  Since the interaction voltage field $F_{1}: TQ \to T^{\ast}Q_{1}$ for the primitive circuit 1 is given by 
\[
F_{1}(q , v  )=(q_{R}, q_{L}, q_{S_1},f_{R}(q_R,v_R),0,f_{S_{1}}(q,v)), 
\]
for $(q,v) = (q_{R}, q_{L}, q_{S_1}, q_{S_2}, q_{C},v_{R}, v_{L}, v_{S_1}, v_{S_2}, v_{C}) \in TQ$.
We can set up equations of motion for $(\mathbf{d}_{D}L_{1}, F_{1},D_{\Delta_{Q_1}})$ as
$$
((q_{1},p_{1},\dot{q}_{1},\dot{p}_{1}), \mathbf{d}_{D} L_{1}(q_{1},v_{1})-\pi_{Q_{1}}^{\ast}F_{1}(q,v)) \in D_{ \Delta _{Q_{1}} }(q_{1},p_{1}),
$$
and expressed more explicitly as
\begin{equation}\label{primCir1}
\begin{split}
\dot{q}_{R}=v_{R},\quad  \dot{q}_{L}=v_{L},\quad \dot{q}_{S_{1}}=v_{S_{1}},\quad -f_{R}=\lambda_{1}, \quad \dot{p}_{L}=-\lambda_{1},\quad
f_{S_{1}}=\lambda_{1},
\end{split}
\end{equation}
together with $p_{L}=Lv_{L}$, $p_{R}=0$, $p_{S_{1}}=0$, $\dot{p}_{R}=0$ and $\dot{p}_{S_{1}}=0$.  These equations of motion are well defined when we are given $(q_2(t), v_2(t) ) \in TQ_2$.

\paragraph{Primitive System 2.}
The configuration space for the primitive system 2 is $Q_{2}= \mathbb{R}^2$ with local coordinates
$q_{2}=(q_{S_2},q_C)$, where $q_{S_2}$ is the charge through the port $S_2$ and $q_C$ is the charge stored in the capacitor.  
The KCL space is given by, for each $q_{2}=(q_{S_2},q_C) \in Q_{2}$, 
\[
	\Delta_{Q_{2}}(q_{2}) = \{ v_2=(v_{S_2},v_C) \in T_{q_{2}}Q_2 \mid v_C - v_{S_2} = 0 \}
\]
and hence the KVL space is given by the annihilator $\Delta_2^\circ(q_{2})$ as
\[
	\Delta^{\circ}_{Q_{2}}(q_{2}) = \{ f_2=(f_{S_2},f_C) \in T_{q_{2}}^{\ast}Q_2 \mid f_C = f_{S_2} \}.
\]

This gives us the Dirac structure $D_{2}$ on $T^{\ast}Q_{2}$. Set the Lagrangian $\mathcal{L}_2:TQ_{2} \to \mathbb{R}$ for Circuit 2 to be
\[
	\mathcal{L}_2 = \frac{1}{2C} q_{C}^2.
\]
Given an interaction voltage field for the primitive system 2 as $F_{2}(q,v)=(q_{S_2},q_C,f_{S_{2}}(q,v),0)$, we can formulate the equations of motion of $(\mathbf{d}_{D}L_{2}, F_{2},D_{\Delta_{Q_2}})$ as
$$
((q_{2},p_{2},\dot{q}_{2},\dot{p}_{2}), \mathbf{d}_{D} L_{2}(q_{2},v_{2})-\pi_{Q_{2}}^{\ast}F_{2}(q,v)) \in D_{ \Delta _{Q_{2}} }(q_{2},p_{2}),
$$
which are given by
\begin{equation}\label{primCir2}
\begin{split}
\dot{q}_{S_{2}}=v_{S_{2}},\quad  \dot{q}_{C}=v_{C},\quad -\frac{q_{C}}{C}= f_{S_2}(q,v) ,
\end{split}
\end{equation}
together with $p_{S_{2}}=0$, $p_{C}=0$, $\dot{p}_{S_{2}}=0$ and $\dot{p}_{C}=0$.  These equations of motion are well defined when we are given $(q_1(t), v_1(t) ) \in TQ_1$.

\paragraph{The Interaction Dirac Structure.}
Set $Q = Q_1 \times Q_2$ and given
\[
	\Sigma_{Q} = \{ (v_R,v_L,v_{S_{1}},v_{S_{2}}, v_C) \in TQ \mid v_{S_1}=v_{S_2}  \},
\]
and with the annihilator 
$$
\Sigma_{Q}^{\circ} = \{ (0,0,f_{S_{1}},f_{S_{2}},0) \in T^{\ast}Q \mid f_{S_1}+f_{S_2}=0  \}.
$$
Setting $D_{Q} = \Sigma_{Q}\oplus \Sigma_{Q}^{\circ}$, we can define the interaction Dirac structure, $D_{\mathrm{int}} = \pi^{\ast}_{Q}D_{Q}$,
which is denoted, locally, by
\begin{align*}
	D_{\mathrm{int}} (q,p)& = \{ (\dot{q},\dot{p}), (\alpha ,w))  \in T_{(q,p)}{T^*Q} \times T^{\ast}_{(q,p)}T^{\ast}Q \mid \\
		 &\hspace{5cm} \dot{q}_{S_1} = \dot{q}_{S_2}, \; w_1 = 0,\; w_2 = 0, \;
		 \alpha_{S_{1}}+\alpha_{S_{2}}=0 \}.
\end{align*}
where $q = (q_R, q_L, q_{S_1},q_{S_2},q_C) , p = (p_R, p_L, p_{S_1},p_{S_2},p_C),\alpha = (\alpha_R, \alpha_L, \alpha_{S_1},\alpha_{S_2},\alpha_C)$, and $w = (w_R, w_L, w_{S_1},w_{S_2},w_C)$.

In this way, the velocity $v=(v_R, v_L, v_{S_{1}},v_{S_{2}}, v_C)$ and force $f_{S}=(0,0,f_{S_{1}},f_{S_{2}},0)$ at the boundaries hold the constraint, $(v,f_S) \in D_{Q}$.
Thus, the equations of motion for the interconnected Lagrange-Dirac dynamical system are given by a set of equations \eqref{primCir1} and \eqref{primCir2} together with 
$v_{S_{1}}=v_{S_{2}}$ and $f_{S_{1}}+f_{S_{2}}=0$.

\medskip

\noindent
{\bf (III) A Ball Rolling on Rotating Tables}\\
\noindent
Consider the mechanical system depicted in Figure \ref{fig:gear_system}, where there are two rotating tables and a ball is rolling on one of the tables without slipping. We assume that the gears are ideally linked by a non-slip constraint without any loss of energy and hence the mechanical system is conservative.  Let $I_{1}$ and $I_{2}$ be moments of inertia for the tables. We will now decompose the system into two primitive systems; (1) a rolling ball on a rotating (large) table and (2) a rotating (large) table. By tearing the mechanical system into the primitive systems, 
there may appear (constraint) torques $\tau_{s_{1}}$ and $\tau_{s_{2}}$ associated with the angular velocities $\dot{s}_{1}$ and $\dot{s}_{2}$.
Later, we will show how the constraint torques as well as the angular velocities at the contact point of the rotating tables can be incorporated into an interaction Dirac structure.
\begin{figure}[h]
\begin{center}
\includegraphics[scale=1]{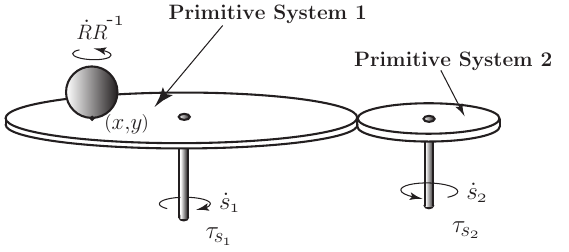}
\caption{A Rolling Ball on Rotating Tables without Slipping }
\label{fig:gear_system}
\end{center}
\end{figure}

\paragraph{Primitive System 1.}
The configuration space for the primitive system 1, namely, a ball of unit radius rolling on a rotating (large) table is given by $Q_{1}=\mathbb{R}^{2} \times SO(3) \times S^{1}$ (see, for instance, \cite{BKMM1996,LewMur1995}), where we denote a point in $Q_{1}$ by $q_{1}=(x,y,R, s_{1})$. Here $(x,y) \in \mathbb{R}^{2}$ denotes the position of the contact point 
of the ball with respect to the center of rotation of the  table, $R$ is the rotational matrix in $SO(3)$ and $s_{1}$ denotes the rotation angle about the shaft.  The ball is assumed to be a sphere with uniform mass density. So, let $m$ and $I$ be the mass and the moment of inertia of the ball.  Let $I_{s_{1}}$ be the moment of inertia of the large table about the vertical axis. Then, the Lagrangian of the ball and rotating table $L_{1}: TQ_{1}\to \mathbb{R}$ is given by, for $(q_{1},v_{1})=(x,y,R, s_{1},v_{x},v_{y},v_{R},v_{s_{1}}) \in TQ_{1}$,
$$
L_{1}(q_{1},v_{1})=\frac{1}{2}m(v_x^{2} + v_y^2) +\frac{1}{2}I\mathrm{tr}\left(v_{R}R^{-1} \cdot v_{R}R^{-1}\right)+\frac{1}{2}I_{s_{1}}||v_{s_{1}}||^{2}.
$$ 
The ball is rolling on the table without slipping and hence we have the nonholonomic constraints as follows (see \cite[page 800]{LewMur1995}):
\begin{align*}
	\Delta_{Q_{1}}(q_{1}) = \{  (v_x, v_y ,v_{R},v_{s_{1}}) \in T_{q_{1}}Q_{1}  \mid \;\;& v_{x}- \mathbf{i} \cdot v_{R} R^{-1} \cdot \mathbf{k} = - v_{s_{1}} y,\;\;  v_{y}+ \mathbf{k} \cdot v_{R} R^{-1} \cdot \mathbf{j} =  v_{s_{1}} x \}.
\end{align*}
Then, we can define a Dirac structure $D_{1}$ on $T^{\ast}Q_{1}$ induced from the distribution $\Delta_{Q_{1}}$ as, for $(q_{1},p_{1})=(x,y,R, s_{1},p_{x},p_{y},p_{R},p_{s_{1}}) \in T^{\ast}Q_{1}$,
\begin{align*}
	D_{1}(q_{1},p_{1}) &= \{ \left((\delta q_{1}, \delta p_{1}), (\alpha_1, w_1)\right) \in T_{(q_{1}, p_{1})}(T^{\ast}Q_1) \times T^{\ast}_{(q_{1}, p_{1})}(T^{\ast}Q_1) \mid \\
		&\hspace{1cm} \delta{q}_{1}=w_{1} \in \Delta_{Q_{1}}(q_{1}),\;\;\alpha_{1}+\delta{p}_{1} \in \Delta^{\circ}_{Q_{1}}(q_{1}) \}.
\end{align*}

By decomposition, the torque $\tau_{s_{1}}$ about the shaft may be regarded as an external force $F_{1}(q,v)=(0,0,\tau_{s_{1}} (q,v) )$ for the primitive system 1. Then, the equations of motion for $(D_{1}, \mathbf{d}_{D}L_{1}, F_{1})$ may be obtained from
$$
((q_{1},p_{1},\dot{q}_{1},\dot{p}_{1}), \mathbf{d}_{D} L_{1}(q_{1},v_{1})-\pi_{Q_{1}}^{\ast}F_{1}(q,v)) \in D_{1}(q_{1},p_{1}).
$$
 
 \paragraph{Primitive System 2.}
The configuration manifold for the primitive system 2 is a circle, $Q_2 = S^1$ and we set a point $q_{2}=s_{2} \in Q_{2}$. By left trivialization we interpret $TQ_{2}=TS^1$ as $S^1 \times \mathbb{R}$, and the Lagrangian  for the primitive system 2 is given by the rotational kinetic energy as, for $(q_{2},v_{2})=(s_{2},v_{s_{2}}) \in TQ_{2}$,
\[
	L_2(q_{2},v_{2}) = \frac{I_2}{2} v_{s_{2}}^2.
\]
Again, we have the canonical Dirac structure $D_{2}$ on $T^{\ast}Q_{2}$ as, for each $(q_{2},p_{2})=(s_{2},p_{s_{2}})$,
\[
	D_2 (q_{2},p_{2})= \{ ( \delta{s}_2, \delta{p}_{s_2} , \alpha_{s_2}, w_{s_2}) \mid \delta{s}_2 = w_{s_2} , \delta{p}_{s_2} + \alpha_{s_2} = 0 \}.
\]
Setting the torque $\tau_{s_{2}}$ about the shaft as an external force $F_{2}(q,v)=\tau_{s_{1}}(q,v)$ for the primitive system 1. Then, the equations of motion for $(D_{2}, \mathbf{d}_{D}L_{2}, F_{2})$ may be obtained from
$$
((q_{2},p_{2},\dot{q}_{2},\dot{p}_{2}), \mathbf{d}_{D} L_{2}(q_{2},v_{2})-\pi_{Q_{2}}^{\ast}F_{2}(q,v)) \in D_{2}(q_{2},p_{2}).
$$

\paragraph{Interaction Dirac Structure.}
  Let $Q = Q_1 \times Q_2$ and $q =(q_{1},q_{2})=(x,y,R,s_{1},s_{2}) \in Q$.  In order to interconnect the two primitive systems, we need to impose the constraints due to the non slip conditions.   The interconnection constraint between the primitive system 1 and primitive system 2 is given by, for each $v=(v_{1},v_{2})=(v_{x},v_{y},v_{R},v_{s_{1}},v_{s_{2}}) \in T_{q}Q$,
\[
	\Sigma_{Q}(q) = \{ (v_{x},v_{y},v_{R},v_{s_{1}},v_{s_{2}}) \in T_{q}Q \mid v_{{s}_1} + v_{{s}_2} = 0 \}
\]
and with its annihilator
\[
	\Sigma_{Q} ^\circ(q) = \operatorname{span}(\omega_1)
\]
where $\omega_1 = ds_1 - ds_2$. Setting $D_{Q}=\Sigma_{Q} \oplus \Sigma_{Q}^{\circ} \subset TQ \oplus T^{\ast}Q$, we can define the interaction Dirac structure as $D_{\rm int}=\pi^{\ast}_{Q}D_{Q}$.

Upon interconnecting the two primitive systems, one needs to impose the constraint on $(v,F)=(v_{r},v_{R},v_{s_{1}},v_{s_{2}}, 0,0,\tau_{s_{1}},\tau_{s_{2}}) \in TQ \oplus T^{\ast}Q$ given by $(v,F) \in D_{Q}(q)$.
This constraint ensures that the gears rotate (without slipping) at the same speed in opposite directions and the constraint torques are in equilibrium.

\paragraph{The Interconnected Lagrange-Dirac System.}
The Dirac structure for the interconnected system is given by
\[
	D_{\Delta_{Q}} = \left( D_1 \oplus D_2  \right) \boxtimes D_{\mathrm{int}}.
\]
Note that $D_{\Delta_{Q}}$ is defined by the canonical two-form on $T^{\ast}Q$ and the distribution 
\[
	\Delta_{Q} = (TQ_1 \oplus TQ_2) \cap \Sigma_{\mathrm{int}}.
\]
Additionally, the annihilator is given by $\Delta_{Q}^\circ =  \Sigma_{\mathrm{int}}^\circ$.
Setting $L= L_1 + L_2$, the dynamics of the interconnected Lagrange-Dirac dynamical system $(\extd L_{D},D_{\Delta_{Q}})$ may be given by,
\[
((q,p,\dot{q},\dot{p}),\mathbf{d}_{D}L(q,v)) \in D_{\Delta_{Q}}(q,p),
\]
for each $(q,v, p) \in TQ \oplus T^{\ast}Q$ where $p=\partial{L}/\partial{v}$.
\section{Conclusions}

  Tearing and interconnecting physical systems plays an essential role in modular modeling.
  In this paper we have shown how these concepts manifest themselves in the context of interconnection of Dirac structures and Lagrange-Dirac dynamical systems.
  In particular, it was shown how a Lagrange-Dirac dynamical system can be decomposed into primitive subsystems and how the primitive subsystems can be interconnected to recover the original Lagrange-Dirac dynamical system through an interaction Dirac structure.
  To do this, we first introduced the notion of {\it interconnection of Dirac structures} by employing the tensor product of Dirac structures $\boxtimes$.
  This process can be repeated $n$-fold due to the associativity of $\boxtimes$ (assuming the clean-intersection condition holds).
  This enables us to understand large heterogenous systems by decomposing them and keeping track of the relevant interaction Dirac structures.
  We also clarified how the variational principle for an interconnected system can be decomposed into variational structures on separate primitive subsystems which are coupled through boundary constraints on the velocities and forces.
  Lastly, we demonstrated our theory with the examples of a mass-spring system, an electric circuit, and a noholonomic mechanical system.
  The result of this study verifies a geometrically intrinsic framework for analyzing large heterogenous systems through tearing and interconnection.
\medskip

  We hope that the framework provided here can be explored further.  We are specifically interested in the following areas for future work:

\begin{itemize}
 \item {\bf \it The use of more general interaction Dirac structures:} We can consider presymplectic structures, such as those associated with gyrators, motors, magnetic couplings and so on (in this paper, we mostly studied interaction Dirac structures of the form $\Sigma_{\mathrm{int}} \oplus \Sigma^\circ_{\mathrm{int}}$). For some examples of these more general interconnections see \cite{WyCh1977, Yo1995}.

\item {\bf \it Reduction and symmetry for interconnected Lagrange-Dirac systems:}  The reduction of Lagrange-Dirac dynamical systems has been studied for Lie groups and cotangent bundles (\cite{YoMa2007b}, \cite{YoMa2009}).
Interpreting the curvature tensor of a principal connection as an interaction Dirac structure we may arrive at some interesting interpretations of magnetic couplings (for details on the curvature tensor see \cite{CeMaRa2001}). 

\item {\bf \it Interconnection of multi-Dirac structures and Lagrange-Dirac field systems:} In conjunction with classical field theories or infinite dimensional dynamical systems, the notion of multi-Dirac structures have been developed by \cite{VaYoLe2012}, which may be useful for the analysis of fluids, continuums as well as electromagnetic fields. The present work of the interconnection of Dirac structures and the associated Lagrange-Dirac systems may be extended to the case of classical fields or infinite dimensional dynamical systems.

\item {\bf \it Applications to complicated systems:}  For example, we could consider guiding central motion problems, multibody systems, fluid-structure interactions, passivity controlled interconnected systems, etc. (for examples of these systems see \cite{Little1983, Featherstone1987,JaVa2013,Yo1995,Van96} and \cite{OrtVanMasEsc02}).

  
  \item {\bf \it Discrete versions of interconnection and $\boxtimes$:}  By discretizing the Hamilton-Pontryagin principle one arrives at a discrete mechanical version of Dirac structures (see \cite{BoRaMa2009} and \cite{LeOh2010}).   A discrete version of $\boxtimes$ could allow for notions of interconnection of variational integrators. 
 \end{itemize}
 
\bibliographystyle{plainnat}
\bibliography{JaYo}

\begin{thebibliography}{45}
\providecommand{\natexlab}[1]{#1}
\providecommand{\url}[1]{\texttt{#1}}
\expandafter\ifx\csname urlstyle\endcsname\relax
  \providecommand{\doi}[1]{doi: #1}\else
  \providecommand{\doi}{doi: \begingroup \urlstyle{rm}\Url}\fi

\bibitem[Afshari et~al.(2006)Afshari, Bhat, Hajimiri, and Marsden]{ABHM2006}
E.~Afshari, H.~S. Bhat, A.~Hajimiri, and J.~E. Marsden.
\newblock Extremely wideband signal shaping using one- and two-dimensional
  nonuniform nonlinear transmission lines.
\newblock \emph{Journal of Applied Physics}, 99:\penalty0 05401--1--16, 2006.

\bibitem[Blankenstein(2000)]{BL2000}
G.~Blankenstein.
\newblock \emph{Implicit {H}amiltonian Systems: Symmetry and Interconnection}.
\newblock PhD thesis, University of Twente, 2000.

\bibitem[Bloch(2003)]{Bloch2003}
A.~M. Bloch.
\newblock \emph{Nonholonomic Mechanics and Control}, volume~24 of
  \emph{Interdisciplinary Applied Mathematics}.
\newblock Springer Verlag, 2003.

\bibitem[Bloch and Crouch(1997)]{BC1997}
A.~M. Bloch and P.~E. Crouch.
\newblock {Representations of Dirac structures on vector spaces and nonlinear
  LC circuits}.
\newblock In \emph{Proc. Sympos. Pure Math}, volume~64, pages 103--117.
  American Mathematical Society, 1997.

\bibitem[Bloch et~al.(1996)Bloch, Krishnaprasad, Marsden, and Murray]{BKMM1996}
A.~M. Bloch, P.~S. Krishnaprasad, J.~E. Marsden, and R.~M. Murray.
\newblock Nonholonomic mechanical systems with symmetry.
\newblock \emph{Archive for rational mechanics and analysis}, 136:\penalty0
  21--99, 1996.

\bibitem[Bou-Rabee and Marsden(2009)]{BoRaMa2009}
N.~Bou-Rabee and J.~E. Marsden.
\newblock {H}amilton-{P}ontryagin integrators on lie groups: Introduction and
  structure preserving properties.
\newblock \emph{Foundations of Computational Mathematics}, 9:\penalty0
  197--219, 2009.

\bibitem[Brayton(1971)]{Bra1971}
R.~K. Brayton.
\newblock \emph{Mathematical Aspects of Electrical Networks}, chapter Nonlinear
  Reciprical Networks, pages 1--15.
\newblock American Mathematical Society, 1971.

\bibitem[Cendra et~al.(2001)Cendra, Marsden, and Ratiu]{CeMaRa2001}
H.~Cendra, J.~E. Marsden, and T.~S. Ratiu.
\newblock \emph{Lagrangian Reduction by Stages}.
\newblock Number 722 in Memoirs of the American Mathematical Society. American
  Mathematical Society, 2001.

\bibitem[Cervera et~al.(2007)Cervera, van~der Schaft, and
  Ba\~{n}os]{CeVdeScBa2007}
J.~Cervera, A.~J. van~der Schaft, and A.~Ba\~{n}os.
\newblock {Interconnection of port-Hamiltonian systems and composition of Dirac
  structures}.
\newblock \emph{Automatica}, 43:\penalty0 212--225, 2007.

\bibitem[Chang et~al.(2002)Chang, Bloch, Leonard, Marsden, and
  Woolsey]{ChBlLeMa2002}
D.~A. Chang, A.~M. Bloch, N.~E. Leonard, J.~E. Marsden, and C.~A. Woolsey.
\newblock The equivalence of controlled {L}agrangian and controlled
  {H}amiltonian systems.
\newblock \emph{ESAIM: Control, Optimisation and Calculus of Variations},
  8\penalty0 (1):\penalty0 393--422, 2002.

\bibitem[Courant(1990)]{Cour1990}
T.~J. Courant.
\newblock {Dirac manifolds}.
\newblock \emph{Transactions of the American Mathematical Society},
  319:\penalty0 631--661, 1990.

\bibitem[Courant and Weinstein(1988)]{CoWe1988}
T.~J. Courant and A.~Weinstein.
\newblock {Beyond Poisson structures}.
\newblock In \emph{{Action Hamiltoniennes de groupes. Trois\`{e}me
  th\'{e}ori\'{e}me de Lie (Lyon 1986)}}, volume~27 of \emph{Travaux en Cours},
  pages 39--49. Hermann, 1988.

\bibitem[Dirac(1950)]{Dirac1950}
P.~A.~M. Dirac.
\newblock Generalized {H}amiltonian dynamics.
\newblock \emph{Canadian J. Mathematics}, 2:\penalty0 129--148, 1950.

\bibitem[Dorfman(1993)]{Dorfman1993}
I.~Dorfman.
\newblock \emph{{Dirac structures and integrability of nonlinear evolution
  equations}}.
\newblock {\it (Nonlinear Science:Theory and Applications) }. Wiley \& Sons
  Ltd., 1993.

\bibitem[Dufour and Wade(2008)]{DuWa2004}
J.~Dufour and A.~Wade.
\newblock On the local structure of {D}irac manifolds.
\newblock \emph{Compos. Math.}, 144\penalty0 (3):\penalty0 774--786, 2008.

\bibitem[Duindam(2006)]{Duindam_thesis}
V.~Duindam.
\newblock \emph{Port-based modelling and control for efficient bipedal walking
  robots}.
\newblock PhD thesis, University of Twente, 2006.

\bibitem[Duindam et~al.(2009)Duindam, Macchelli, Stramigioli, and
  Bruyninckx]{modelling2009}
V.~Duindam, A.~Macchelli, S.~Stramigioli, and H.~Bruyninckx, editors.
\newblock \emph{Modelling and control of complex physical systems}.
\newblock Springer, 2009.

\bibitem[Featherstone(1987)]{Featherstone1987}
R.~Featherstone.
\newblock \emph{Robot Dynamics Algorithms}.
\newblock Kluwer Academic, 1987.

\bibitem[Gualtieri(2011)]{Gua2011}
M.~Gualtieri.
\newblock Generalized complex geometry.
\newblock \emph{Ann. of Math. (2)}, 174\penalty0 (1):\penalty0 75--123, 2011.

\bibitem[Jacobs and Vankerschaver(2013)]{JaVa2013}
H.~O. Jacobs and J.~Vankerschaver.
\newblock Fluid-structure interaction in the {L}agrange-{P}oincar{\'e}
  formalism.
\newblock arXiv:1212.1144 [math.DS], Aug 2013.

\bibitem[Jacobs and Yoshimura(2011)]{JY2011}
H.~O. Jacobs and H.~Yoshimura.
\newblock Interconnection and composition of {D}irac structures for
  {L}agrange-{D}irac systems.
\newblock In \emph{Decision and Control and European Control Conference
  (CDC-ECC), 2011 50th IEEE Conference on}, pages 928--933, 2011.

\bibitem[Jacobs et~al.(2010)Jacobs, Yoshimura, and Marsden]{JYM2010}
H.~O. Jacobs, H.~Yoshimura, and J.~E. Marsden.
\newblock {Interconnection of Lagrange-Dirac dynamical systems for electric
  circuits}.
\newblock \emph{AIP Conference Proceedings}, 1281\penalty0 (1):\penalty0
  566--569, 2010.
\newblock \doi{10.1063/1.3498539}.

\bibitem[Kron(1963)]{Kr1963}
G.~Kron.
\newblock \emph{Diakoptics: The piecewise solution of large-scale systems}.
\newblock McDonald, London, 1963.

\bibitem[Leok and Ohsawa(2011)]{LeOh2010}
M.~Leok and T.~Ohsawa.
\newblock {Variational and geometric structures of discrete Dirac mechanics}.
\newblock \emph{Foundations of Computational Mathematics}, 11:\penalty0
  529--562, 2011.
\newblock ISSN 1615-3375.

\bibitem[Lewis and Murray(1995)]{LewMur1995}
A.~D. Lewis and R.~M. Murray.
\newblock Variational principles for constrained systems: theory and
  expirements.
\newblock \emph{{Journal of Nonlinear Science}}, 30\penalty0 (6):\penalty0
  793--815, 1995.

\bibitem[Littlejohn(1983)]{Little1983}
R.~G. Littlejohn.
\newblock Variational principles of guiding centre motion.
\newblock \emph{Journal of Plasma Physics}, 29:\penalty0 111--125, 1983.

\bibitem[Marsden and Ratiu(1999)]{MandS}
J.~E. Marsden and T.~S. Ratiu.
\newblock \emph{Introduction to Mechanics and Symmetry}, volume~17 of
  \emph{Texts in Applied Mathematics}.
\newblock Springer-Verlag, 2nd edition, 1999.

\bibitem[Merker(2009)]{Merker2009}
J.~Merker.
\newblock On the geometric structure of {H}amiltonian systems with ports.
\newblock \emph{Journal of Nonlinear Science}, 19\penalty0 (6):\penalty0
  717--738, 2009.

\bibitem[Ortega et~al.(1998)Ortega, Perez, Nicklasson, and
  Sira-Ramirez]{Ort1998}
R.~Ortega, J.~A.~L. Perez, P.~J. Nicklasson, and H.~J. Sira-Ramirez.
\newblock \emph{{Passivity-based control of Euler-Lagrange systems: Mechanical,
  electrical, and electromechanical applications}}.
\newblock Communications and Control Engineering. Springer-Verlag, 1st edition,
  1998.

\bibitem[Ortega et~al.(2002)Ortega, van~der Schaft, Maschke, and
  Escobar]{OrtVanMasEsc02}
R.~Ortega, A.~J. van~der Schaft, B.~M. Maschke, and G.~Escobar.
\newblock {Interconnection and damping assignment passivity-based control of
  port-controlled Hamiltonian systems}.
\newblock \emph{Automatica}, 38\penalty0 (4):\penalty0 585--596, 2002.
\newblock ISSN 0005-1098.

\bibitem[Paynter(1961)]{Pay1961}
H.~M. Paynter.
\newblock \emph{Analysis and Design of Engineering Systems}.
\newblock MIT Press, 1961.

\bibitem[Tulczyjew(1977)]{Tu1977}
W.~M. Tulczyjew.
\newblock The {L}egendre transformation.
\newblock \emph{Annales de l'Institute {H}enri {P}oincar\'e}, 27\penalty0
  (1):\penalty0 101--114, 1977.

\bibitem[van~der Schaft(1996)]{Van96}
A.~J. van~der Schaft.
\newblock {Port-Hamiltonian systems: An introductory survey}.
\newblock In \emph{Proceedings of the International Conference of Mathematics},
  volume~3, pages 1--27. European Mathematical Society, 1996.

\bibitem[van~der Schaft and Maschke(1995)]{VaMa1995}
A.~J. van~der Schaft and B.~M. Maschke.
\newblock The {H}amiltonian formulation of energy conserving physical systems
  with external ports.
\newblock \emph{Archi f\"{u}r Elektronik und \"{U}bertragungstechnik},
  49:\penalty0 362--371, 1995.

\bibitem[Vankerschaver et~al.(2012)Vankerschaver, Yoshimura, and
  Leok]{VaYoLe2012}
J.~Vankerschaver, H.~Yoshimura, and M.~Leok.
\newblock The {H}amilton-{P}ontryagin principle and multi-{D}irac structures
  for classical field theories.
\newblock \emph{Journal of Mathematical Physics}, 53:\penalty0 072903, 2012.

\bibitem[Weinstein(2010)]{Wein2009}
A.~Weinstein.
\newblock Symplectic categories.
\newblock \emph{Port. Math.}, 67\penalty0 (2):\penalty0 261--278, 2010.
\newblock ISSN 0032-5155.
\newblock \doi{10.4171/PM/1866}.

\bibitem[Wyatt and Chua(1977)]{WyCh1977}
J.~L. Wyatt and L.~O. Chua.
\newblock A theory of nonenergic $n$-ports.
\newblock \emph{Circuit Theory and Applications}, 5:\penalty0 181--208, 1977.

\bibitem[Yoshimura(1995)]{Yo1995}
H.~Yoshimura.
\newblock \emph{Dynamics of Flexible Multibody Systems}.
\newblock PhD thesis, Waseda University, 1995.

\bibitem[Yoshimura and Marsden(2006{\natexlab{a}})]{YoMa2006a}
H.~Yoshimura and J.~E. Marsden.
\newblock {Dirac structures in Lagrangian mechanics part I: Implicit Lagrangian
  systems}.
\newblock \emph{Journal of Geometry and Physics}, 57\penalty0 (1):\penalty0 133
  -- 156, 2006{\natexlab{a}}.
\newblock ISSN 0393-0440.

\bibitem[Yoshimura and Marsden(2006{\natexlab{b}})]{YoMa2006b}
H.~Yoshimura and J.~E. Marsden.
\newblock {Dirac structures in Lagrangian mechanics part II: Variational
  structures}.
\newblock \emph{Journal of Geometry and Physics}, 57\penalty0 (1):\penalty0 209
  -- 250, 2006{\natexlab{b}}.
\newblock ISSN 0393-0440.

\bibitem[Yoshimura and Marsden(2006{\natexlab{c}})]{YoMa2006c}
H.~Yoshimura and J.~E. Marsden.
\newblock {Dirac structures and implicit Lagrangian systems in electric
  networks}.
\newblock In \emph{Proceedings of the 17th International Symposium on the
  Mathematical Theory of Networks and Systems}, pages 1--6, 2006{\natexlab{c}}.

\bibitem[Yoshimura and Marsden(2007{\natexlab{a}})]{YoMa2007a}
H.~Yoshimura and J.~E. Marsden.
\newblock {Dirac Structures and the Legendre transformation for implicit
  Lagrangian and Hamiltonian systems}.
\newblock In \emph{Lagrangian and {H}amiltonian Methods for Nonlinear Control
  2006}, volume 366 of \emph{Lecture Notes in Control and Information
  Sciences}, pages 233--247. Springer Berlin/Heidelberg, 2007{\natexlab{a}}.

\bibitem[Yoshimura and Marsden(2007{\natexlab{b}})]{YoMa2007b}
H.~Yoshimura and J.~E. Marsden.
\newblock {Reduction of Dirac structures and the Hamilton-Pontryagin
  principle}.
\newblock \emph{Reports on Mathematical Physics}, 60\penalty0 (3):\penalty0 381
  -- 426, 2007{\natexlab{b}}.
\newblock ISSN 0034-4877.

\bibitem[Yoshimura and Marsden(2009)]{YoMa2009}
H.~Yoshimura and J.~E. Marsden.
\newblock {Dirac cotangeant bundle reduction}.
\newblock \emph{Journal of Geometric Mechanics}, 1:\penalty0 87--158, 2009.

\bibitem[Yoshimura et~al.(2010)Yoshimura, Jacobs, and Marsden]{YoJaMa2010}
H.~Yoshimura, H.~O. Jacobs, and J.~E. Marsden.
\newblock {Interconnection of Dirac structures in Lagrange-Dirac dynamical
  systems}.
\newblock In \emph{Proceedings of the 20th International Symposium on the
  Mathematical Theory of Networks and Systems}, 2010.

\end{thebibliography}
 
 \end{document}